\theoremstyle{plain}
\newtheorem{lem}{Lemma}[section]
\newtheorem{prop}[lem]{Proposition}
\newtheorem{thm}[lem]{Theorem}
\theoremstyle{remark}
\theoremstyle{definition}
\newtheorem{exm}[lem]{Example}
\newtheorem{defn}[lem]{Definition}
\newcommand{\Fun}{\operatorname{Fun}\nolimits}
\newcommand{\id}{\operatorname{id}\nolimits} 
\newcommand{\Sub}{\operatorname{Sub}\nolimits}
\newcommand{\op}{\mathrm{op}}
\newcommand{\inj}{\mathrm{inj}}
\newcommand{\os}{\mathrm{os}}
\newcommand{\sur}{\mathrm{sur}}
\newcommand{\Set}{\mathrm{Set}}
\newcommand{\lto}[1][{}]{\stackrel{#1}{\longrightarrow}} 
\renewcommand{\to}[1][{}]{\stackrel{#1}{\rightarrow}} 
\newcommand{\xto}{\xrightarrow}
\def\a{\alpha}
\def\p{\phi}
\def\s{\sigma}
\def\t{\tau}
\def\m{\mu}
\def\la{\lambda}
\def\Ga{\Gamma}
\def\A{{\mathcal A}}
\def\C{{\mathcal C}}
\def\D{{\mathcal D}}
\def\P{{\mathcal P}}
\def\T{{\mathcal T}}
\def\bbN{\mathbb N}
\numberwithin{equation}{section}
\title{The artinian conjecture \\ 
(following Djament, Putman, Sam, and Snowden) }
\author{Henning Krause} % authors
\address{
\begin{flushleft}
        \hspace{0.3cm}  Fakult\"at f\"ur Mathematik \\
         \hspace{0.3cm}  Universit\"at Bielefeld\\
         \hspace{0.3cm}  33501 Bielefeld, Germany\\
        %\hspace{0.3cm} \\
\end{flushleft}
}
\email{hkrause@math.uni-bielefeld.de} 
\thanks{The paper is in a final form and no version of it will 
be submitted for publication elsewhere.}
\begin{document}

\maketitle

%%%%%%%%%%%%%%%%%% Abstract Form %%%%%%%%%%%%%%%

\begin{abstract}
 This note provides a self-contained exposition of the proof
  of the artinian conjecture, following closely Djament's Bourbaki
  lecture.  The original proof is due to Putman, Sam, and Snowden.

%%%%%%%%%%%%%%%%%%% The followings are option %%%%%%%%%%%%%%
% \bigskip

% {\it Key Words:} \quad Ring, Algebra, Representation.

% \medskip
% {\it $2000$ Mathematics Subject Classification{\rm :}} 
% \quad Primary  16Gxx, 16Dxx;  Secondary 16Exx, 16Lxx.
%%%%%%%%%%%%%%%%%%%%%%%%%%%%%%%%%
\end{abstract}

%%%%%%%%%%%%%%%%%%%%
%\tableofcontents
%%%%%%%%%%%%%%%%%%%%%

\section{Introduction}

This note provides a complete proof of the celebrated artinian
conjecture. The proof is due to Putman, Sam, and Snowden
\cite{PS,SS}. Here, we follow closely the elegant exposition of
Djament in \cite{Dj1}. For the origin of the conjecture and its
consequences, we refer to those papers and Djament's Bourbaki lecture
\cite{Dj2}. In addition, the expository articles by Kuhn, Powell and
Schwartz in \cite{KR2000} are recommended.

There are two main result. Fix a locally noetherian Grothendieck
abelian category $\A$, for instance, the category of modules over a
noetherian ring.

\begin{thm}
  Let $A$ be a ring whose underlying set is finite. For the category
  $\P(A)$ of free $A$-modules of finite rank,
  the functor category $\Fun(\P(A)^\op,\A)$ is locally noetherian.
\end{thm}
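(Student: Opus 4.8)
The plan is to follow Djament's exposition of the Putman--Sam argument, which proceeds through two reductions and one genuinely combinatorial step. \emph{Step 1: reduce the coefficients.} A functor $F\in\Fun(\P(A)^\op,\A)$ is noetherian as soon as it is finitely generated there, i.e. a quotient of a finite direct sum of functors $\bbZ[\Hom_{\P(A)}(-,A^n)]\otimes_{\bbZ}X$ with $X\in\A$; since $\A$ is locally noetherian we may take each $X$ noetherian, and as noetherian objects are stable under finite sums and quotients it suffices to prove every such functor noetherian. Using an injective cogenerator of $\A$ (or the Gabriel--Popescu presentation of $\A$ as a localization of a module category), together with the finiteness of each set $\Hom_{\P(A)}(A^m,A^n)$ — the first use of $A$ being finite — one reduces to the case $\A=\Mod k$ for a noetherian ring $k$, at the price of allowing an arbitrary such $k$ in place of a field.

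\emph{Step 2: replace $\P(A)^\op$ by a category of split injections.} Let $\mathbf{VIC}(A)$ be the category with objects the $A^n$ and $\Hom_{\mathbf{VIC}(A)}(A^m,A^n)=\{(s,p)\mid s\colon A^m\to A^n,\ p\colon A^n\to A^m,\ ps=\id\}$, i.e. a split monomorphism together with a complement of its image. The assignment $(s,p)\mapsto p$ defines an identity-on-objects functor $\mathbf{VIC}(A)\to\P(A)^\op$; restriction along it, $\Fun(\P(A)^\op,\A)\to\Fun(\mathbf{VIC}(A),\A)$, is exact, and since the functor is the identity on objects a subfunctor of an $F\in\Fun(\P(A)^\op,\A)$ is determined by its restriction, so restriction reflects noetherianity. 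It therefore suffices to show that the restriction of each generator $\bbZ[\Hom_{\P(A)}(-,A^n)]\otimes_{\bbZ}X$ is noetherian in $\Fun(\mathbf{VIC}(A),\A)$; and indeed, stratifying a linear map by the submodule it has as image — there are finitely many candidates because $A$ is finite, and over a finite field this stratification is just by rank — exhibits this restriction as having a finite filtration by finitely generated $\mathbf{VIC}(A)$-modules (the surjection onto the image being the $\mathbf{VIC}(A)$-variable). This reduces the theorem to the statement that $\Fun(\mathbf{VIC}(A),\Mod k)$ is locally noetherian.

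\emph{Step 3, the heart: $\mathbf{VIC}(A)$ is quasi-Gr\"obner.} Recall the Sam--Snowden notions: a small category $\mathcal{C}$ is \emph{Gr\"obner} if for every object $x$ the poset of singly generated subfunctors of the representable functor at $x$ is noetherian and carries a compatible well-order, and \emph{quasi-Gr\"obner} if a Gr\"obner category maps onto it by a functor with property (F); then for left noetherian $k$ and quasi-Gr\"obner $\mathcal{C}$, $\Fun(\mathcal{C},\Mod k)$ is locally noetherian. The plan is to build an ordered model $\mathbf{OVIC}(A)$ — objects $A^n$ equipped with an ordered distinguished basis, morphisms the $\mathbf{VIC}(A)$-morphisms written in reduced (Hermite-type) normal form — and to observe that, because $A$ is finite, such a normal form is a word over a \emph{finite} alphabet. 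Higman's lemma (the subword order on words over a finite alphabet is a well-quasi-order) then forces the monomial posets of $\mathbf{OVIC}(A)$ to be noetherian, so $\mathbf{OVIC}(A)$ is Gr\"obner; a direct check shows the forgetful functor $\mathbf{OVIC}(A)\to\mathbf{VIC}(A)$ is essentially surjective with property (F). Hence $\mathbf{VIC}(A)$ is quasi-Gr\"obner, $\Fun(\mathbf{VIC}(A),\Mod k)$ is locally noetherian, and Steps 1 and 2 deliver the theorem.

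\emph{Where the difficulty lies.} Steps 1 and 2 are formal once set up, and property (F) is routine; the real content is Step 3 — choosing the correct ordered model together with an honest normal form for morphisms over an arbitrary finite ring, and running the well-quasi-order argument that makes the monomial posets noetherian. That argument is the one point where finiteness of $A$ is indispensable, since it is what makes the alphabet finite so that Higman's (or Dickson's) lemma applies. A secondary technical point is the bookkeeping in Step 2: making the $\mathbf{VIC}(A)$-filtration of the restricted representables precise enough over a general finite ring (where one may first perform a d\'evissage along the powers of the Jacobson radical to reduce to a finite field) that finite generation of the subquotients is visible.
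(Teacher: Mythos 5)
Your outline is the Putman--Sam route through $\mathbf{VIC}(A)$, which is genuinely different from the route taken here. In this paper the theorem is deduced in one line from the statement that $\Fun(\Ga^\op,\A)$ is locally noetherian for the category $\Ga$ of finite sets: the free-module functor $\Ga\to\P(A)$, $X\mapsto A[X]$, is contravariantly finite because $\P(A)(A[X],P)\cong\Ga(X,P)$ and $P$ is a \emph{finite} set (Lemma~\ref{le:contra2}), and base change along contravariantly finite functors preserves local noetherianity (Proposition~\ref{pr:contra}). The finite-sets statement itself comes from the Gr\"obner property of the category $\Ga_\os$ of ordered surjections (Higman's lemma, Lemma~\ref{le:higman}) together with base change along $\Ga_\os\to\Ga_\sur\to\Ga$. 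So the combinatorial heart (an ordered model plus a well-quasi-order argument) is the same in spirit as your Step~3, but it is run on a much simpler category, and the linear algebra over $A$ is bypassed entirely.

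Two of your reductions have genuine gaps. In Step~2, over a general finite ring the image of a linear map $A^m\to A^n$ need not be a free direct summand (take $A=\bbZ/4$ and multiplication by $2$), so ``the surjection onto the image'' is not a $\mathbf{VIC}(A)$-morphism and the stratification by images does not yield a finite filtration of the restricted representable by finitely generated $\mathbf{VIC}(A)$-modules. The parenthetical d\'evissage along powers of the Jacobson radical is not an argument here: it replaces $A$ by $A/J$ and hence changes the source category $\P(A)$, and you supply no functor comparing $\Fun(\P(A)^\op,\A)$ with $\Fun(\P(A/J)^\op,\A)$ that reflects noetherianity. This is exactly why the general finite-ring case is treated via finite sets rather than via $\mathbf{VIC}$. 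In Step~1, the reduction from an arbitrary locally noetherian Grothendieck category $\A$ to $\Mod k$ with $k$ noetherian is asserted via an injective cogenerator or Gabriel--Popescu, but neither device produces a noetherian base ring, nor does local noetherianity of functor categories pass along such presentations; the paper instead works with $\A$ throughout, reducing to the generators $M[\C(-,x)]$ with $M\in\A$ noetherian (Lemma~\ref{le:noeth}) and proving these noetherian directly using exactness of filtered colimits in $\A$ (Lemma~\ref{le:tilde}). For $\A$ a module category over a finite field your sketch has the correct shape of the Putman--Sam argument, but as a proof of the theorem as stated it does not close.
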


This result amounts to the assertion of the artinian conjecture when
$A$ is a finite field and $\A$ is the category of $A$-modules. 

The first theorem is a direct consequence of the following.

\begin{thm}
For the category $\Ga$ of finite sets, the functor category
  $\Fun(\Ga^\op,\A)$ is locally noetherian.
\end{thm}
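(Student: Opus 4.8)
The plan is to obtain the theorem from the theory of Gröbner categories of Sam and Snowden \cite{SS}, in the form presented by Djament \cite{Dj1}.

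\textbf{Reductions.} Since $\A$ is a locally noetherian Grothendieck category, the functor category $\Fun(\Ga^\op,\A)$ is again Grothendieck, and it is locally noetherian if and only if each of its finitely generated objects is noetherian. It is generated by the objects $\mathbb{Z}[\Hom_\Ga(-,S)]\otimes a$, with $S$ a finite set and $a$ a noetherian generator of $\A$, so it suffices to prove that each of these is noetherian. By the standard generalities about locally noetherian Grothendieck categories this reduces to the case $\A=\Mod R$ for an arbitrary noetherian ring $R$, and hence to showing that the principal projective $P_S:=R[\Hom_\Ga(-,S)]$ --- whose value at a finite set $T$ is the free $R$-module on the set of all maps $T\to S$ --- is a noetherian object of $\Fun(\Ga^\op,\Mod R)$. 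Since a quotient, and an extension, of noetherian objects is noetherian, and every finitely generated object is a quotient of a finite direct sum of such $P_S$, it is enough to know that every subobject of every finite direct sum of the $P_S$ is finitely generated. In the language of \cite{SS} this says exactly that the category $\Ga^\op$ is \emph{noetherian}, and it follows once $\Ga^\op$ is shown to be \emph{quasi-Gröbner}.

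\textbf{The combinatorial heart.} To prove $\Ga^\op$ quasi-Gröbner one must produce a Gröbner category $\mathcal O$, built from totally ordered finite sets with a suitably restricted class of order-compatible morphisms, together with a functor $\mathcal O\to\Ga^\op$ satisfying Sam--Snowden's property (F). The passage to ordered sets is what makes $\mathcal O$ \emph{directed} --- it has no nonidentity automorphisms --- and thereby makes a monomial-order formalism available in $\Fun(\mathcal O,\Mod R)$. The Gröbner property of $\mathcal O$ is then verified by showing that, for each of its principal projectives, the poset of monomials under divisibility is a well-partial-order; in the relevant cases this is Dickson's lemma on $\mathbb{N}^k$, respectively Higman's lemma on words over a finite alphabet. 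Well-partial-orderedness makes monomial subobjects finitely generated, and a division algorithm promotes this --- over any noetherian coefficient ring --- to finite generation of arbitrary subobjects, so that $\mathcal O$ is Gröbner. For the functor $\mathcal O\to\Ga^\op$, which is morally ``forget the ordering'', property (F) amounts to a finiteness statement about its comma categories, proved by a careful analysis of the surjection--injection factorizations of maps of finite sets, organized by the induced set partitions and linear orderings of their blocks, of which there are only finitely many.

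\textbf{Conclusion and the main difficulty.} Granting a Gröbner category $\mathcal O$ and a property-(F) functor $\mathcal O\to\Ga^\op$, Sam--Snowden's transfer theorem gives that $\Ga^\op$ is quasi-Gröbner, hence noetherian over every noetherian ring, and running the reductions backwards shows that $\Fun(\Ga^\op,\A)$ is locally noetherian for every locally noetherian Grothendieck category $\A$. The step I expect to be the crux is the construction of $\mathcal O$ together with the functor to $\Ga^\op$: because $\Ga$ contains \emph{all} maps of finite sets, in every combination of surjective and injective parts, the ordered model has to be rich enough that every such map is seen by the finitely many generators of the comma categories, and it is precisely the verification of this finite generation --- equivalently, that the epi--mono factorizations of arbitrary maps are governed by boundedly much combinatorial data --- that carries the real content. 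Checking that $\mathcal O$ is Gröbner via the appropriate well-partial-order lemma is the secondary technical point, and the reductions of the first step are formal.
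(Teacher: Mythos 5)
Your strategy is exactly the one the paper follows (it is the Sam--Snowden/Djament argument), but as written it is a plan rather than a proof: the steps you yourself identify as carrying ``the real content'' are left unexecuted. Concretely, three things are missing. First, the ordered model $\mathcal O$ is never constructed. In the paper it is $\Ga_\os$, the category of finite sets with \emph{ordered surjections} (surjections $f$ with $i<j\Rightarrow \min f^{-1}(i)<\min f^{-1}(j)$); pinning down this class of morphisms is not cosmetic, since the whole Gr\"obner verification depends on it. Second, the well-partial-order statement is not proved, and it is not literally Dickson or Higman off the shelf: one must show that $\Ga(\mathbf n)$ is strongly noetherian under the divisibility order $f\le g\Leftrightarrow f=gh$ with $h$ an ordered surjection (Lemma~\ref{le:higman}), which the paper establishes by a Nash--Williams minimal-bad-sequence argument tailored to this order. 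Third, property (F) for the comparison functor is asserted but not checked; in the paper this is the contravariant finiteness of $\Ga_\os\to\Ga$, proved in two explicit steps, namely $\Ga_\sur(-,\mathbf n)\cong\Ga_\os(-,\mathbf n)\times\mathfrak S_n$ and $\Ga(-,\mathbf n)\cong\bigsqcup_{\mathbf m\hookrightarrow\mathbf n}\Ga_\sur(-,\mathbf m)$, the second being the precise form of your ``boundedly much combinatorial data about epi--mono factorizations''. Without these three verifications the argument reduces to citing the theorem it is meant to prove.

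A smaller but genuine problem is your reduction to $\A=\Mod R$ with $R$ noetherian: this is not a standard generality and I do not see how to justify it (a noetherian object of a Grothendieck category need not have a noetherian endomorphism ring, and Gabriel--Popescu does not produce a noetherian $R$). Fortunately the reduction is unnecessary: the Gr\"obner machinery runs verbatim with coefficients in an arbitrary noetherian object $M\in\A$, which is how the paper proceeds via Lemma~\ref{le:noeth} and Proposition~\ref{pr:groebner0}.
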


The basic idea for the proof is to formulate finiteness conditions on
an essentially small category $\C$ such that $\Fun(\C^\op,\A)$ is
locally noetherian. This leads to the notion of a Gr\"obner
category. Such finiteness conditions have a `direction'. For that
reason we consider contravariant functors $\C\to\A$, because then the
direction is preserved (via Yoneda's lemma) when one passes from $\C$
to $\Fun(\C^\op,\A)$. 

\section{Noetherian posets}

Let $\C$ be a poset. A subset $\D\subseteq\C$ is a \emph{sieve} if
the conditions $x\le y$ in $\C$ and $y\in\D$ imply $x\in\D$. The
sieves in $\C$ are partially ordered by inclusion. 

\begin{defn}
A poset $\C$ is called
\begin{enumerate}
\item \emph{noetherian}  if every ascending chain of
  elements in $\C$ stabilises, and
\item  \emph{strongly noetherian}  if every ascending chain of
  sieves in $\C$ stabilises.
\end{enumerate}
\end{defn}

For a poset $\C$ and $x\in\C$, set $\C(x)=\{t\in\C\mid t\le x\}$. The
assignment $x\mapsto\C(x)$ yields an embedding of $\C$ into the poset
of sieves in $\C$.

\begin{lem}\label{le:stronglynoeth}
 For a poset $\C$ the following are equivalent:
\begin{enumerate}
\item The poset $\C$ is strongly noetherian.
\item For every infinite sequence $(x_i)_{i\in\bbN}$
  of elements in $\C$ there exists $i\in\bbN$ such that $x_j\le x_i$
  for infinitely many $j\in\bbN$. 
\item For every infinite sequence $(x_i)_{i\in\bbN}$ of elements in
  $\C$ there is a map $\a\colon\bbN\to\bbN$ such that $i<j$ implies
  $\a(i)<\a(j)$ and $x_{\a(j)}\le x_{\a(i)}$.
\item For every infinite sequence $(x_i)_{i\in\bbN}$ of elements in
  $\C$ there are $i<j$ in $\bbN$ such that $x_j\le x_i$.
\end{enumerate}
\end{lem}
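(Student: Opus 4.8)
The strategy is to prove the cycle of implications $(1)\Rightarrow(2)\Rightarrow(3)\Rightarrow(4)\Rightarrow(1)$, so that each implication is as lightweight as possible. The implications $(3)\Rightarrow(4)$ and (with a little work) $(2)\Rightarrow(3)$ are combinatorial statements about sequences, while $(4)\Rightarrow(1)$ and $(1)\Rightarrow(2)$ are where the hypothesis on sieves enters.

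For $(1)\Rightarrow(2)$, given a sequence $(x_i)$, I would consider the ascending chain of sieves $\D_n=\bigcup_{i\le n}\C(x_i)$. Strong noetherianity forces $\D_n=\D_{n+1}=\cdots$ for some $n$, so for every $j>n$ the element $x_j$ lies in $\D_n$, hence $x_j\le x_i$ for some $i\le n$. By the pigeonhole principle one such index $i$ works for infinitely many $j$, giving $(2)$. For $(2)\Rightarrow(3)$ I would iterate: set $\a(0)=i_0$ where $i_0$ is an index (from $(2)$) below which infinitely many later terms fall; restrict attention to the infinite subsequence of those $j$ with $x_j\le x_{i_0}$; apply $(2)$ again inside that subsequence to get the next index; and continue, producing a strictly increasing $\a$ with $x_{\a(j)}\le x_{\a(i)}$ whenever $i<j$ — here one uses transitivity of $\le$ to chain the comparisons, but actually each new term is chosen below the immediately preceding chosen term, and within the nested subsequences every earlier chosen term dominates, so transitivity suffices. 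The step $(3)\Rightarrow(4)$ is immediate: take $i=0$, $j=1$ (or any $i<j$) in the sequence produced by $(3)$.

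The substantive implication is $(4)\Rightarrow(1)$. Suppose $\C$ is not strongly noetherian, so there is a strictly ascending chain of sieves $\D_1\subsetneq\D_2\subsetneq\cdots$. For each $n$ pick $x_n\in\D_{n+1}\setminus\D_n$. I claim no two of these are comparable in the way $(4)$ demands: if $j>i$ and $x_j\le x_i$, then since $x_i\in\D_{i+1}$ and $\D_{i+1}$ is a sieve, we get $x_j\in\D_{i+1}\subseteq\D_j$, contradicting $x_j\notin\D_j$. Hence the sequence $(x_i)$ violates $(4)$, completing the contrapositive.

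**Main obstacle.** The only place requiring care is $(2)\Rightarrow(3)$: one must organize the iterated choices so that the comparisons accumulate coherently and $\a$ comes out strictly increasing. The clean way is to build, by recursion, a decreasing sequence of infinite index sets $\bbN=S_0\supseteq S_1\supseteq\cdots$ together with $\a(k)\in S_k$ such that $x_t\le x_{\a(k)}$ for all $t\in S_{k+1}$ and $\min S_{k+1}>\a(k)$; then $i<j$ gives $\a(j)\in S_{j}\subseteq S_{i+1}$, hence $x_{\a(j)}\le x_{\a(i)}$, and strict monotonicity of $\a$ is built in. The existence of each $\a(k)\in S_k$ with infinitely many later $S_k$-indices below it is exactly statement $(2)$ applied to the subsequence indexed by $S_k$. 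Everything else is routine.
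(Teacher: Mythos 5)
Your proof is correct and follows essentially the same route as the paper: the cycle $(1)\Rightarrow(2)\Rightarrow(3)\Rightarrow(4)\Rightarrow(1)$, with the same chain of sieves $\bigcup_{i\le n}\C(x_i)$ for $(1)\Rightarrow(2)$ and the same contrapositive for $(4)\Rightarrow(1)$. Your bookkeeping for $(2)\Rightarrow(3)$ via nested infinite index sets $S_k$ is just a cleaner reformulation of the paper's recursive choice of $\a(n)=\min\{i>\a(n-1)\mid x_j\le x_i\le x_{\a(n-1)}\text{ for infinitely many }j\}$.
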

\begin{proof} 
  (1) $\Rightarrow$ (2): Suppose that $\C$ is strongly noetherian and
  let $(x_i)_{i\in\bbN}$ be elements in $\C$.  For $n\in\bbN$ set
  $\C_n=\bigcup_{i\le n}\C(x_i)$. The chain $(\C_n)_{n\in\bbN}$
  stabilises, say $\C_n=\C_N$ for all $n\ge N$. Thus there exists
  $i\le N$ such that $x_j\le x_i$ for infinitely many $i\in\bbN$.

  (2) $\Rightarrow$ (3): Define $\a\colon\bbN\to\bbN$ recursively by
  taking for $\a(0)$ the smallest $i\in\bbN$ such that $x_j\le x_i$
  for infinitely many $j\in\bbN$. For $n> 0$ set \[\a(n)=\min\{i >
  \a(n-1)\mid x_j\le x_i\le x_{\a(n-1)} \textrm{ for infinitely many
  }j\in\bbN\}.\]
 
 (3) $\Rightarrow$ (4): Clear.

 (4) $\Rightarrow$ (1): Suppose there is a properly ascending chain
 $(\C_n)_{n\in\bbN}$ of sieves in $\C$. Choose
 $x_n\in\C_{n+1}\setminus \C_{n}$ for each $n\in\bbN$. There are $i<j$
 in $\bbN$ such that $x_j\le x_i$.  This implies
 $x_j\in\C_{i+1}\subseteq\C_{j}$ which is a contradiction.
\end{proof}

\section{Functor categories}

Let $\C$ be an essentially small category and $\A$ a Grothendieck
abelian category. We denote by $\Fun(\C^\op,\A)$ the category of functors
$\C^\op\to\A$. The morphisms between two functors are the natural
transformations. Note that $\Fun(\C^\op,\A)$ is a Grothendieck abelian
category.

Given an object $x\in\C$, the evaluation functor
\[\Fun(\C^\op,\A)\lto\A,\quad F\mapsto  F(x)\]
admits a left adjoint
\[\A\lto \Fun(\C^\op,\A),\quad M\mapsto  M[\C(-,x)]\]
where for any set $X$ we denote by $M[X]$ a coproduct of copies of $M$
indexed by the elements of $X$. Thus we have a natural isomorphism
\begin{equation}\label{eq:adj}
\Fun(\C^\op,\A)(M[\C(-,x)],F)\cong\A(M,F(x)).
\end{equation}

\begin{lem}\label{le:gen}
If $(M_i)_{i\in I}$ is a set of generators of $\A$, then the functors
$M_i[\C(-,x)]$ with $i\in I$ and $x\in\C$ generate $\Fun(\C^\op,\A)$.
\end{lem}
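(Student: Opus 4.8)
The plan is to show that the family $\{M_i[\C(-,x)]\}_{i\in I,\,x\in\C}$ detects nonzero morphisms, which is the standard criterion for a generating family in a Grothendieck abelian category. Concretely, I would prove that if $F\in\Fun(\C^\op,\A)$ and $G\subsetneq F$ is a proper subobject, then there exist $i\in I$ and $x\in\C$ together with a morphism $M_i[\C(-,x)]\to F$ whose image is not contained in $G$; equivalently, it suffices to exhibit a nonzero morphism $M_i[\C(-,x)]\to F/G$.

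First I would note that since $G$ is a proper subobject, there is some object $x\in\C$ at which $(F/G)(x)\neq 0$ in $\A$: indeed, a natural transformation is zero precisely when it is zero at every object, so if $(F/G)(x)=0$ for all $x$ then $F/G=0$, contradicting properness. Next, using that $(M_i)_{i\in I}$ generates $\A$, I would pick $i\in I$ and a morphism $M_i\to (F/G)(x)$ in $\A$ that is nonzero. Finally, I would feed this through the adjunction isomorphism \eqref{eq:adj}, which gives
\[
\Fun(\C^\op,\A)\bigl(M_i[\C(-,x)],\,F/G\bigr)\;\cong\;\A\bigl(M_i,\,(F/G)(x)\bigr),
\]
so the chosen morphism $M_i\to(F/G)(x)$ corresponds to a morphism $M_i[\C(-,x)]\to F/G$ which, being nonzero under the isomorphism, is itself nonzero. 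Composing a lift (or rather, using the defining property of generators via quotients) then shows the family generates.

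There is a small point to be careful about: the clean statement "a family $\{P_j\}$ generates iff for every proper subobject $G\subsetneq F$ some $P_j$ maps to $F$ with image not inside $G$" is equivalent, in a cocomplete abelian category, to "every object is a quotient of a coproduct of the $P_j$", and it is also equivalent to the $P_j$ jointly detecting nonzero maps out of them into arbitrary objects. I would simply invoke whichever formulation is most convenient; the essential content is entirely contained in the two ingredients above (generators of $\A$ detect nonzero objects pointwise, and the adjunction \eqref{eq:adj} transports this to the functor category). The main—and really only—obstacle is bookkeeping: making sure the evaluation $F\mapsto F(x)$ is exact (it is, since (co)limits in $\Fun(\C^\op,\A)$ are computed pointwise), so that $(F/G)(x)\cong F(x)/G(x)$ and the pointwise nonvanishing argument is legitimate. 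Beyond that, the proof is a direct application of the adjunction.
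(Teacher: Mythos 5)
Your overall strategy --- reduce everything to the adjunction \eqref{eq:adj} and the fact that the $M_i$ generate $\A$ --- is exactly the paper's (whose entire proof is ``use the adjointness isomorphism''), but the specific criterion you reduce to is not valid, and this is precisely the ``small point'' you flag without resolving. Producing, for each proper subobject $G\subsetneq F$, a \emph{nonzero} morphism $M_i[\C(-,x)]\to F/G$ only shows that the family detects nonzero objects, and in a general Grothendieck category this is strictly weaker than generating: for instance, in the category of torsion abelian groups the object $\bigoplus_p \bbZ/p$ admits a nonzero morphism to every nonzero object, yet is not a generator ($\bbZ/4$ is not a quotient of a coproduct of copies of it). The ``lift'' you would need at the end does not exist in general, because $M_i[\C(-,x)]$ has no reason to be projective in $\Fun(\C^\op,\A)$; and of the three equivalent formulations you list, none is ``detects nonzero objects'', so invoking ``whichever is most convenient'' does not close the gap.

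The repair uses nothing beyond your own ingredients, just applied in the right order. Since kernels and cokernels in $\Fun(\C^\op,\A)$ are computed pointwise, pick $x$ with $G(x)\subsetneq F(x)$, and then --- using the generating property of the $(M_i)$ in $\A$ in its strong form --- pick $i$ and a morphism $u\colon M_i\to F(x)$ whose image is \emph{not contained in} $G(x)$, rather than a morphism into the quotient. The morphism $\bar u\colon M_i[\C(-,x)]\to F$ corresponding to $u$ under \eqref{eq:adj} then has image not contained in $G$: by naturality of \eqref{eq:adj} in the second variable, the composite $M_i[\C(-,x)]\to F\to F/G$ corresponds to the composite $M_i\to F(x)\to F(x)/G(x)$, which is nonzero by the choice of $u$. (Equivalently, one can check that the canonical morphism $\coprod M_i[\C(-,x)]\to F$, indexed by all $i$, all $x$ and all morphisms $M_i\to F(x)$, is a pointwise epimorphism, again because the $M_i$ generate $\A$.)
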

\begin{proof}
Use the adjointness isomorphism \eqref{eq:adj}.
\end{proof}

A Grothendieck abelian category $\A$ is \emph{locally noetherian} if
$\A$ has a generating set of noetherian objects. In that case an
object $M\in\A$ is noetherian iff $M$ is \emph{finitely presented}
(that is, the representable functor $\A(M,-)$ preserves filtered
colimits); see \cite[Chap.~V]{St1975} for details.

\begin{lem}\label{le:noeth}
Let $\A$ be locally noetherian. Then $\Fun(\C^\op,\A)$ is locally
noetherian iff $M[\C(-,x)]$ is noetherian for every noetherian
$M\in\A$ and $x\in\C$.
\end{lem}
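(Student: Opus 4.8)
The plan is to prove both implications by a general-nonsense argument about generators in a locally noetherian Grothendieck category. For the forward direction, if $\Fun(\C^\op,\A)$ is locally noetherian, I would show that each $M[\C(-,x)]$ with $M$ noetherian is already noetherian; this is the interesting half. For the converse, one invokes Lemma~\ref{le:gen} together with the observation that a Grothendieck category generated by a set of noetherian objects is locally noetherian, so it suffices to exhibit a generating set consisting of noetherian objects, and the $M_i[\C(-,x)]$ do the job.

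For the forward implication: first I would fix a generating set $(M_i)_{i\in I}$ of $\A$ consisting of noetherian objects, available since $\A$ is locally noetherian. Then by Lemma~\ref{le:gen} the functors $M_i[\C(-,x)]$, $i\in I$, $x\in\C$, form a generating set of $\Fun(\C^\op,\A)$. Since $\Fun(\C^\op,\A)$ is assumed locally noetherian, every noetherian object there is finitely presented, and conversely every object that is a quotient of a finite coproduct of the $M_i[\C(-,x)]$ that happens to be noetherian is finitely presented; more to the point, in a locally noetherian category an object is noetherian iff it is finitely generated, and finite coproducts of noetherian objects are noetherian. So I would argue: given a noetherian $M\in\A$, realise $M$ as a quotient of a finite coproduct $\bigoplus_{k=1}^n M_{i_k}$ (using that $M$, being noetherian hence finitely generated, is covered by finitely many of the generators), apply the exact, colimit-preserving left adjoint $M\mapsto M[\C(-,x)]$ to get an epimorphism $\bigoplus_{k=1}^n M_{i_k}[\C(-,x)]\twoheadrightarrow M[\C(-,x)]$, and conclude that $M[\C(-,x)]$ is a quotient of a finite coproduct of noetherian objects of $\Fun(\C^\op,\A)$, hence noetherian.

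For the converse implication: assume $M[\C(-,x)]$ is noetherian for every noetherian $M\in\A$ and every $x\in\C$. Take $(M_i)_{i\in I}$ a generating set of $\A$ by noetherian objects. By Lemma~\ref{le:gen}, $\{M_i[\C(-,x)] : i\in I,\ x\in\C\}$ generates $\Fun(\C^\op,\A)$, and by hypothesis each of these is noetherian. Hence $\Fun(\C^\op,\A)$ is a Grothendieck category with a generating set of noetherian objects, i.e.\ locally noetherian.

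The main obstacle I anticipate is purely bookkeeping around the functor $M\mapsto M[\C(-,x)]$: one must check it is exact (so that it preserves epimorphisms) and commutes with coproducts, which is immediate from its description as ``coproduct of copies indexed by $\C(-,x)$'' and from it being a left adjoint by \eqref{eq:adj}. The only genuinely conceptual input is the standard fact — cited from \cite[Chap.~V]{St1975} in the excerpt — that in a locally noetherian Grothendieck category ``noetherian'' coincides with ``finitely generated'', together with the stability of noetherian objects under finite coproducts and quotients; granting that, both directions are formal.
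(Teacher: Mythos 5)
Your converse direction is correct and is exactly the paper's argument (Lemma~\ref{le:gen} plus the definition of locally noetherian). The forward direction, however, has a genuine gap: after reducing to the generators you conclude that $M[\C(-,x)]$ is ``a quotient of a finite coproduct of noetherian objects $M_{i_k}[\C(-,x)]$'', but you never establish that $M_{i_k}[\C(-,x)]$ is noetherian --- and that is precisely an instance of the statement you are trying to prove. Being a member of a generating set of a locally noetherian category does not make an object noetherian (in $\Mod(\bbZ)$, which is locally noetherian, $\bigoplus_{n\in\bbN}\bbZ$ is a generator but not noetherian), so ``$\Fun(\C^\op,\A)$ is locally noetherian and the $M_i[\C(-,x)]$ generate it'' buys you nothing about the $M_i[\C(-,x)]$ themselves. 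Your reduction of a general noetherian $M$ to the generators $M_i$ is sound but circular as it stands.

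The missing ingredient is the one-line observation that carries the paper's entire proof: if $M$ is finitely presented in $\A$, then $M[\C(-,x)]$ is finitely presented in $\Fun(\C^\op,\A)$, because by the adjunction \eqref{eq:adj} one has $\Hom(M[\C(-,x)],-)\cong\A(M,(-)(x))$ and evaluation at $x$ preserves (filtered) colimits. Since $\A$ is locally noetherian, a noetherian $M$ is finitely presented, hence $M[\C(-,x)]$ is finitely presented; and since $\Fun(\C^\op,\A)$ is assumed locally noetherian, finitely presented objects there are noetherian. Note that once you have this, the detour through a finite cover of $M$ by generators is unnecessary: the adjunction argument applies directly to any noetherian $M$. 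You gesture at the equivalence ``noetherian $=$ finitely presented'' in locally noetherian categories, but you never apply it to transport finiteness across the left adjoint $M\mapsto M[\C(-,x)]$, which is the only non-formal step in the lemma.
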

\begin{proof}
  First observe that $M[\C(-,x)]$ is finitely presented if $M$ is
  finitely presented. This follows from the isomorphism \eqref{eq:adj}
  since evaluation at $x\in\C$ preserves colimits. Now the assertion
  of the lemma is an immediate consequence of Lemma~\ref{le:gen}.
\end{proof}

\section{Noetherian functors}

Let $\C$ be an essentially small category and fix an object
$x\in\C$. Set
\[\C(x)=\bigsqcup_{t\in\C}\C(t,x).\]
Given $f,g\in\C(x)$, let $\langle f\rangle$ denote the set of morphisms
in $\C(x)$ that factor through $f$, and set $f\le_x g$
if $\langle f\rangle\subseteq\langle g\rangle$. We identify $f$ and
$g$  when $\langle f\rangle=\langle g\rangle$. This yields a
poset which we denote by $\bar\C(x)$.

A functor is  \emph{noetherian} if every ascending chain of
subfunctors stabilises.

\begin{lem}\label{le:repnoeth}
  The functor $\C(-,x)\colon\C^\op\to\Set$ is noetherian iff the poset
  $\bar\C(x)$ is strongly noetherian.
\end{lem}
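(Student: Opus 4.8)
The plan is to unwind the definition of ``noetherian functor'' for the set-valued functor $\C(-,x)$ into a statement about ascending chains of sieves in the poset $\bar\C(x)$, thereby reducing the claim to a tautology modulo one bookkeeping lemma. The key observation is that a subfunctor $G\subseteq\C(-,x)$ is the same datum as a choice, for each $t\in\C$, of a subset $G(t)\subseteq\C(t,x)$, subject to the condition that precomposition with any morphism $s\to t$ in $\C$ sends $G(t)$ into $G(s)$. Translating through the identification $\C(x)=\bigsqcup_{t}\C(t,x)$, a subfunctor is exactly a subset $D\subseteq\C(x)$ that is closed under right-division: if $g\in D$ and $g=f\comp h$ for some composable $h$, then $f\in D$. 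But ``$f$ is obtained from $g$ by such a factorization'' is precisely the relation $f\le_x g$ (or rather, $\langle f\rangle\subseteq\langle g\rangle$, noting $g\in\langle g\rangle$ always), so a subfunctor corresponds under $\C(x)\twoheadrightarrow\bar\C(x)$ to a sieve in $\bar\C(x)$ — a downward-closed subset for $\le_x$.

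First I would make precise the claim that the lattice of subfunctors of $\C(-,x)$ is isomorphic (as a poset under inclusion) to the lattice of sieves of $\bar\C(x)$. For the forward map, send $G\subseteq\C(-,x)$ to $\bigsqcup_t G(t)\subseteq\C(x)$, then push forward along the quotient $\C(x)\to\bar\C(x)$; one checks the image is a sieve and that distinct subfunctors have distinct images (injectivity uses that $G$ is recovered as the preimage). For the reverse map, given a sieve $\D\subseteq\bar\C(x)$, pull it back to a subset of $\C(x)$ and declare $G(t)$ to be its intersection with $\C(t,x)$; the sieve condition guarantees functoriality. These two assignments are mutually inverse and order-preserving, so the poset of subfunctors of $\C(-,x)$ is isomorphic to the poset of sieves in $\bar\C(x)$.

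Granting this isomorphism, the equivalence is immediate: every ascending chain of subfunctors of $\C(-,x)$ stabilises if and only if every ascending chain of sieves in $\bar\C(x)$ stabilises, which is the definition of $\bar\C(x)$ being strongly noetherian. So $\C(-,x)$ is noetherian iff $\bar\C(x)$ is strongly noetherian.

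The main obstacle — really the only point requiring care — is verifying that the correspondence between subfunctors and sieves is well-defined in both directions, i.e. that closure under precomposition by arbitrary morphisms of $\C$ matches downward-closure for $\le_x$ exactly. The subtlety lies in the identification of $f$ with $g$ when $\langle f\rangle=\langle g\rangle$: one must confirm that the subset $\bigsqcup_t G(t)$ of $\C(x)$ is saturated for this equivalence (if $\langle f\rangle=\langle g\rangle$ and $g\in G$ then $f\in G$), which follows because $f\in\langle g\rangle$ forces $f$ to factor through $g$, hence $f$ lies in any subfunctor containing $g$ — and symmetrically. Once this saturation is checked, the passage to $\bar\C(x)$ is clean and the rest is formal. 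I would also remark that the ordering $\le_x$ is designed precisely so that $\langle f\rangle$, viewed as a subset of $\C(x)$, is itself the principal sieve generated by $f$, which makes the bijection between elements of $\bar\C(x)$ and principal sieves transparent and anchors the whole argument.
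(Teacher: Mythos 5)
Your approach is the same as the paper's: exhibit an order-preserving bijection between subfunctors of $\C(-,x)$ and sieves in $\bar\C(x)$ and then observe that the noetherian conditions on the two sides match by definition. The paper's proof is a single sentence; you expand it, and your remark about checking that $\bigsqcup_t G(t)$ is saturated under the identification $\langle f\rangle=\langle g\rangle$ is a worthwhile point of care that the paper leaves implicit. The final conclusion and the structure are correct.

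However, the central step contains a consistent orientation error. You write that a subfunctor corresponds to a subset $D\subseteq\C(x)$ ``closed under right-division: if $g\in D$ and $g=f\comp h$ for some composable $h$, then $f\in D$.'' That is not the subfunctor condition. For $e\colon s\to t$ the induced map $\C(t,x)\to\C(s,x)$ is $f\mapsto fe$, so $G\subseteq\C(-,x)$ is a subfunctor exactly when $f\in D$ implies $fe\in D$ whenever $fe$ is defined --- closure under \emph{precomposition} (right-multiplication), not right-division. You then compound this by identifying ``$g=fh$'' with the relation $f\le_x g$; but in the paper's convention $f\le_x g$ means $\langle f\rangle\subseteq\langle g\rangle$, which (since $f\in\langle f\rangle$) forces $f$ to factor through $g$, i.e.\ $f=ge$ --- whereas $g=fh$ gives $g\le_x f$. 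The two sign errors cancel, so you land on the correct statement that subfunctors are the downward-closed subsets of $\bar\C(x)$, but each intermediate claim as written is false and would mislead a reader trying to verify the bijection. Rewriting the closure condition as ``$f\in D$ and $g=fe$ imply $g\in D$'' and the order relation as ``$g\le_x f$ iff $g=fe$ for some $e$'' repairs the argument with no change to the overall plan.
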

\begin{proof}
Sending  $F\subseteq\C(-,x)$ to $\bigcup_{t\in\C}F(t)$
induces an inclusion preserving bijection between the subfunctors of
$\C(-,x)$ and the sieves in $\bar\C(x)$.
\end{proof}

For a poset $\T$ let $\Set\wr\T$ denote the category consisting of
pairs $(X,\xi)$ such that $X$ is a set and $\xi\colon X\to\T$ is a
map. A morphism $(X,\xi)\to (X',\xi')$ is a map $f\colon X\to X'$ such
that $\xi(a)\le\xi' f(a)$ for all $a\in X$.

A functor $\C^\op\to\Set\wr\T$ is given by a pair $(F,\p)$ consisting
of a functor $F\colon\C^\op\to\Set$ and a map
$\p\colon\bigsqcup_{t\in\C}F(t)\to\T$ such that $\p(a)\le \p(F(f)(a))$
for every $a\in F(t)$ and $f\colon t'\to t$ in $\C$.
 
\begin{lem}\label{le:noeth3}
Let $\T$ be a  noetherian poset. If $\C(-,x)$ is noetherian,
then any functor $(\C(-,x),\p)\colon\C^\op\to\Set\wr\T$ is noetherian.
\end{lem}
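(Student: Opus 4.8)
The plan is to describe the subfunctors of $(\C(-,x),\p)$ concretely, reduce the statement to a single ascending chain inside the noetherian poset $\T$, and then derive a contradiction from the strong noetherianity of $\bar\C(x)$.

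First I would record that a subfunctor of $(\C(-,x),\p)$ amounts to a pair $(G,\psi)$ in which $G\subseteq\C(-,x)$ is a subfunctor and $\psi$ attaches to each $a\in\bigsqcup_{t\in\C}G(t)$ an element $\psi(a)\le\p(a)$ of $\T$ satisfying $\psi(a)\le\psi(a\comp f)$ whenever $f\colon t'\to t$ in $\C$ and $a\in G(t)$; one such refines another precisely when $G\subseteq G'$ and $\psi(a)\le\psi'(a)$ for all $a$. Given an ascending chain $\bigl((G_n,\psi_n)\bigr)_{n\in\bbN}$ of subfunctors, the $G_n$ form an ascending chain of subfunctors of $\C(-,x)$, which stabilises because $\C(-,x)$ is noetherian; passing to a tail I may assume $G_n=G$ is constant, and then the chain condition says exactly that $\psi_0\le\psi_1\le\cdots$ holds pointwise on $\bigsqcup_{t\in\C}G(t)$. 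It remains to show this chain of maps stabilises.

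Suppose it does not. Then $S=\{n\in\bbN\mid\psi_n\neq\psi_{n+1}\}$ is infinite; enumerate $S=\{m_0<m_1<\cdots\}$ and, using $\psi_{m_k}\le\psi_{m_k+1}$, pick for each $k$ an element $d_k\in\bigsqcup_{t\in\C}G(t)$ with $\psi_{m_k}(d_k)<\psi_{m_k+1}(d_k)$. By Lemma~\ref{le:repnoeth} the poset $\bar\C(x)$ is strongly noetherian, so Lemma~\ref{le:stronglynoeth}(3) applied to the sequence $(d_k)_k$ in $\bar\C(x)$ furnishes a strictly increasing $\a\colon\bbN\to\bbN$ with $d_{\a(j)}\le_x d_{\a(i)}$ for $i<j$; replacing $(d_k,m_k)$ by $(d_{\a(k)},m_{\a(k)})$ I may moreover assume $d_j\le_x d_i$ for all $i<j$. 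Since $d_j\in\langle d_j\rangle\subseteq\langle d_i\rangle$, the morphism $d_j$ factors through $d_i$; because $d_i$ lies in $G$ and $\psi_n$ is functorial, this yields $\psi_n(d_i)\le\psi_n(d_j)$ for every $n$ and all $i<j$. Setting $c_i=\psi_{m_{i+1}}(d_i)$, for $i\ge 1$ one obtains
\[
c_{i-1}=\psi_{m_i}(d_{i-1})\le\psi_{m_i}(d_i)<\psi_{m_i+1}(d_i)\le\psi_{m_{i+1}}(d_i)=c_i,
\]
where the three comparisons use respectively that $d_i$ factors through $d_{i-1}$, the choice of $m_i$, and $m_i+1\le m_{i+1}$ together with the pointwise ascent of $(\psi_n)$. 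Hence $(c_i)_{i\in\bbN}$ is an infinite strictly ascending chain in $\T$, contradicting that $\T$ is noetherian.

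The one genuinely delicate step is the passage from the trivial observation that each individual chain $\bigl(\psi_n(a)\bigr)_n$ stabilises in $\T$ to a stabilisation uniform over the a priori infinite set $\bigsqcup_{t\in\C}G(t)$: strong noetherianity of $\bar\C(x)$ (equivalently, noetherianity of $\C(-,x)$, by Lemma~\ref{le:repnoeth}) is exactly what lets one thin the offending witnesses $d_k$ down to a sequence descending for $\le_x$, and the functoriality inequality $\psi(a)\le\psi(a\comp f)$ is what then converts ``one strict increase at each step'' into a single honest infinite strictly ascending chain in $\T$, which $\T$ cannot accommodate.
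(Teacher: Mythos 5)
Your argument is correct and follows essentially the same route as the paper's proof: stabilise the underlying chain of subfunctors of $\C(-,x)$, extract witnesses of strict increase, thin them via Lemma~\ref{le:stronglynoeth} and Lemma~\ref{le:repnoeth} to a $\le_x$-descending sequence, and chain the three inequalities (strict increase at the chosen index, pointwise ascent in $n$, and monotonicity under precomposition) into an infinite strictly ascending chain in $\T$. The only differences are cosmetic: you spell out what a subobject in $\Set\wr\T$ is and work with an arbitrary non-stabilising chain rather than assuming it strictly ascending from the outset.
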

\begin{proof}
  Let $(F_n,\p_n)_{n\in\bbN}$ be a strictly ascending chain of
  subfunctors of $(F,\p)$. The chain $(F_n)_{n \in\bbN}$ stabilises
  since $\C(-,x)$ is noetherian. Thus we may assume that $F_n=F$ for
  all $n\in\bbN$, and we find $f_n\in \bigsqcup_{t\in\C}F(t)$ such
  that $\p_{n}(f_{n})<\p_{n+1}(f_{n})$. The poset $\bar\C(x)$ is
  strongly noetherian by Lemma~\ref{le:repnoeth}. It follows from
  Lemma~\ref{le:stronglynoeth} that there is a map
  $\a\colon\bbN\to\bbN$ such that $i<j$ implies $\a(i)<\a(j)$ and
  $f_{\a(j)}\le_x f_{\a(i)}$. Thus
  \[\p_{\a(n)}(f_{\a(n)})<\p_{\a(n)+1}(f_{\a(n)})\le
  \p_{\a(n+1)}(f_{\a(n)})\le \p_{\a(n+1)}(f_{\a(n+1)}).\] This yields
  a strictly ascending chain in $\T$, contradicting the assumption on
  $\T$.
\end{proof}

\begin{defn}
  A partial order $\le$ on $\C(x)$ is \emph{admissible} if the
  following holds:
\begin{enumerate}
\item The  order $\le$ restricted to $\C(t,x)$ is total and noetherian for every $t\in\C$. 
\item For $f,f'\in\C(t,x)$ and $e\in\C(s,t)$, the condition $f<f'$
  implies $fe<f'e$.
\end{enumerate}
\end{defn}

Fix an admissible partial order $\le$ on $\C(x)$ and an object $M$ in
a Grothendieck abelian category $\A$. Let $\Sub(M)$ denote the
poset of subobjects of $M$ and consider the functor
\[\C(-,x)\wr M\colon\C^\op\lto \Set\wr\Sub(M),\quad t\mapsto 
\big(\C(t,x),(M)_{f\in\C(t,x)}\big).\] For a subfunctor $F\subseteq
M[\C(-,x)]$ define a subfunctor $\tilde F\subseteq \C(-,x)\wr M$ as
follows:
\[\tilde F\colon\C^\op\lto\Set\wr\Sub(M), \quad t\mapsto
\Big(\C(t,x),\big(\pi_f(M[\C(t,x)_f]\cap F(t))\big)_{f\in\C(t,x)}
\Big)\] where $\C(t,x)_f=\{g\in\C(t,x)\mid f\le g\}$ and $\pi_f\colon
M[\C(t,x)_f]\to M$ is the projection onto the factor corresponding to
$f$. For a morphism $e\colon t'\to t$ in $\C$, the morphism $\tilde
F(e)$ is induced by precomposition with $e$. Note that 
\[\pi_f(M[\C(t,x)_f]\cap F(t))\subseteq \pi_{fe}(M[\C(t',x)_{fe}]\cap F(t'))\]
since $\le$ is compatible with the composition in $\C$.

\begin{lem}\label{le:tilde}
  Suppose there is an admissible partial order on $\C(x)$.  Then the
  assignment which sends a subfunctor $F\subseteq M[\C(-,x)]$ to
  $\tilde F$ preserves proper inclusions.  Therefore $M[\C(-,x)]$ is
  noetherian provided that $\C(-,x)\wr M$ is noetherian.
\end{lem}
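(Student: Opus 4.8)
The plan is to reduce the noetherianity of $M[\C(-,x)]$ to that of $\C(-,x)\wr M$ by showing that the passage $F\mapsto\tilde F$ is injective on subfunctors, in the strong sense that it carries proper inclusions $F\subsetneq F'$ to proper inclusions $\tilde F\subsetneq\tilde F'$. Once this is established, any strictly ascending chain $(F_n)$ of subfunctors of $M[\C(-,x)]$ would yield a strictly ascending chain $(\tilde F_n)$ of subfunctors of $\C(-,x)\wr M$, contradicting the assumed noetherianity of the latter; hence $M[\C(-,x)]$ is noetherian.

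So the heart of the matter is: if $F\subseteq F'\subseteq M[\C(-,x)]$ and $\tilde F=\tilde F'$, then $F=F'$. First I would fix $t\in\C$ and compare $F(t)$ and $F'(t)$ as subobjects of $M[\C(t,x)]$. The set $\C(t,x)$ is totally ordered by the admissible order, and it is noetherian, so I can argue by a kind of descending/Noetherian induction on the "leading term". Concretely, for $a\in F'(t)$ write it in coordinates with respect to the basis $\C(t,x)$; let $f$ be the smallest index with nonzero component (this exists because $\le$ restricted to $\C(t,x)$ is total and noetherian, and $a$ has "finite support" since $M[-]$ is a coproduct). Then $a\in M[\C(t,x)_f]$, so its $f$-component $\pi_f(a)$ lies in $\pi_f(M[\C(t,x)_f]\cap F'(t))$, which by hypothesis equals $\pi_f(M[\C(t,x)_f]\cap F(t))$. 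Thus there is some $b\in M[\C(t,x)_f]\cap F(t)$ with $\pi_f(b)=\pi_f(a)$. Then $a-b\in F'(t)$ and $a-b$ lies in the span of the strictly larger indices, i.e. its leading index is strictly bigger than $f$; now the Noetherian property of the total order $\le$ on $\C(t,x)$ lets me iterate, producing elements of $F(t)$ that approximate $a$ with leading terms marching strictly upward, and this process must terminate, giving $a\in F(t)$. Hence $F(t)=F'(t)$ for all $t$, so $F=F'$.

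The step I expect to be the main obstacle is making the "leading term" induction rigorous in the abelian-category setting: unlike the module case, elements of $M[\C(t,x)]=\bigoplus_{f\in\C(t,x)}M$ are not literally tuples, so "$a-b$ has strictly larger leading index" must be phrased in terms of subobjects and the canonical maps $\pi_f$ and the inclusions $M[\C(t,x)_g]\hookrightarrow M[\C(t,x)_f]$ for $f\le g$. I would handle this by working throughout with the filtration of $M[\C(t,x)]$ by the subobjects $M[\C(t,x)_f]$ and its successive quotients $M[\C(t,x)_f]/M[\C(t,x)_{>f}]\cong M$, interpreting "leading index $\ge f$" as "belongs to $M[\C(t,x)_f]$" and "leading index $=f$" as "nonzero in the quotient $M$ at level $f$". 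The termination of the induction is exactly the statement that $\le$ is noetherian on $\C(t,x)$: one cannot have an infinite strictly ascending sequence of leading indices. A small technical point, already recorded before the lemma, is that $\tilde F(e)$ is well defined (the inclusion $\pi_f(M[\C(t,x)_f]\cap F(t))\subseteq\pi_{fe}(M[\C(t',x)_{fe}]\cap F(t'))$ uses admissibility condition (2)), so the comparison of $\tilde F$ and $\tilde F'$ is genuinely a comparison of subfunctors and may be performed object-wise, which is what the argument above does.
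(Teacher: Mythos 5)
Your high-level plan is the same as the paper's: show that $F\mapsto\tilde F$ carries proper inclusions to proper inclusions, and then transport noetherianity from $\C(-,x)\wr M$ back to $M[\C(-,x)]$. But the way you execute the crucial injectivity step is genuinely different from the paper, and the difference is exactly where your proposal has a gap.

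You take $a\in G(t)$, look at its \emph{minimal} index of support (leading term), subtract off an element of $F(t)$ with the same leading coefficient using $\tilde F=\tilde G$, and iterate; termination comes from noetherianity of $\le$ on $\C(t,x)$. This is the classical Gr\"obner-basis reduction argument, and it is fine when $\A$ is a module category. The problem, which you yourself flag as ``the main obstacle,'' is that this argument is intrinsically element-wise: the inductive step ``form $a-b$ and observe its leading index went up'' has no content when $\A$ is an abstract Grothendieck category, and the reformulation you sketch --- replacing ``leading index $\ge f$'' by ``belongs to $M[\C(t,x)_f]$'' and ``leading index $=f$'' by ``nonzero in the $f$-th graded piece'' --- only rephrases the problem. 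The iteration still needs to pick a single element, cancel its leading coefficient, and pass to the difference; none of this can be expressed purely in terms of the subobjects $F(t)\subseteq G(t)\subseteq M[\C(t,x)]$. So as written the proof only establishes the lemma for $\A=\Mod R$, not for a general locally noetherian Grothendieck category, and the section that follows uses the general case.

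The paper resolves this by arguing from the other end of the order and avoiding iteration entirely. Assuming $F\subsetneq G$, one first uses that $\C(t,x)=\bigcup_f\C(t,x)_f$ is a directed union (totality of $\le$) and that filtered colimits are exact in $\A$ to produce \emph{some} $f$ with $M[\C(t,x)_f]\cap F(t)\neq M[\C(t,x)_f]\cap G(t)$, and then picks such an $f$ \emph{maximal} (possible because $\le$ is noetherian). Maximality means that for every $g>f$ the two intersections agree, i.e.\ the kernels of $\pi_f$ restricted to $M[\C(t,x)_f]\cap F(t)$ and to $M[\C(t,x)_f]\cap G(t)$ coincide; since the middle terms differ, the images under $\pi_f$ must differ, so $\tilde F\neq\tilde G$. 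This is a one-shot argument phrased entirely in terms of subobjects, kernels, and images, and hence works in any Grothendieck abelian category. If you want to salvage your proposal, the fix is precisely this switch from ``minimal leading term, iterate'' to ``maximal $f$ where $F$ and $G$ disagree, compare kernels and images of $\pi_f$''; the noetherian hypothesis is then used once, to produce the maximal $f$, rather than to bound an infinite reduction process.
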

\begin{proof}
  Let $F\subseteq G \subseteq M[\C(-,x)]$. Then $\tilde
  F\subseteq\tilde G$.  Now suppose that $F\neq G$. Thus there exists
  $t\in \C$ such that $F(t)\neq G(t)$. We have
  $\C(t,x)=\bigcup_{f\in\C(t,x)}\C(t,x)_f$, and this union is directed
  since $\le$ is total. Thus
  \[F(t)=\sum_{f\in\C(t,x)}\big(M[\C(t,x)_f]\cap F(t)\big)\] since
  filtered colimits in $\A$ are exact. This yields $f$ such that
  \[M[\C(t,x)_f]\cap F(t)\neq M[\C(t,x)_f]\cap G(t).\] Choose
  $f\in\C(t,x)$ maximal with respect to this property, using that $\le$ is
  noetherian.  Now observe that the projection
$\pi_f$ induces  an exact sequence
\[0\lto \sum_{f<g}\big(M[\C(t,x)_g]\cap F(t)\big)\lto F(t)\lto
\pi_f\big(M[\C(t,x)_f]\cap F(t)\big)\lto 0\] since the kernel of
$\pi_f$ equals the directed union $\sum_{f<g}M[\C(t,x)_g]$. For the
directedness one uses again that $\le$ is total.
Thus \[\pi_f\big(M[\C(t,x)_f]\cap F(t)\big)\neq
\pi_f\big(M[\C(t,x)_f]\cap G(t)\big)\] and therefore $\tilde
F\neq\tilde G$.
\end{proof}

\begin{prop}\label{pr:groebner0}
  Let $x\in\C$. Suppose that $\C(-,x)$ is noetherian and that $\C(x)$
  has an admissible partial order. If $M\in\A$ is noetherian, then $M[\C(-,x)]$ is noetherian.
\end{prop}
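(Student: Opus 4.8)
The plan is to combine the two key lemmas already established, Lemma~\ref{le:noeth3} and Lemma~\ref{le:tilde}. Since $\C(x)$ carries an admissible partial order by hypothesis, Lemma~\ref{le:tilde} applies, and it reduces the proposition to a single claim: the functor $\C(-,x)\wr M\colon\C^\op\to\Set\wr\Sub(M)$ is noetherian.

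To establish that claim I would first observe that $\C(-,x)\wr M$ is a functor of the shape $(\C(-,x),\p)$ treated in Lemma~\ref{le:noeth3}: its underlying $\Set$-valued functor is the representable functor $\C(-,x)$, and the accompanying map $\p\colon\bigsqcup_{t\in\C}\C(t,x)\to\Sub(M)$ is the constant map with value the top element $M\in\Sub(M)$, for which the compatibility $\p(a)\le\p(F(e)(a))$ is automatic. Secondly I would note that the poset $\Sub(M)$ is noetherian, since by definition this says precisely that every ascending chain of subobjects of $M$ stabilises, which is exactly the hypothesis that $M$ is a noetherian object of $\A$.

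Granting these two observations, Lemma~\ref{le:noeth3} --- applied with $\T=\Sub(M)$ and using that $\C(-,x)$ is noetherian --- shows that $\C(-,x)\wr M$ is noetherian, and then Lemma~\ref{le:tilde} yields that $M[\C(-,x)]$ is noetherian. The argument is thus a short assembly of earlier results; the only points requiring care are recognising that the admissibility hypothesis is exactly what Lemma~\ref{le:tilde} consumes and that ``$M$ is noetherian'' is literally the assertion ``$\Sub(M)$ is a noetherian poset'', after which no computation remains.
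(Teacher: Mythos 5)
Your proposal is correct and follows exactly the paper's intended proof, which is the one-line "Combine Lemmas~\ref{le:noeth3} and \ref{le:tilde}." You have simply unpacked the combination: identifying $\C(-,x)\wr M$ as a functor of the form $(\C(-,x),\p)$ with $\p$ the constant map to the top element of $\Sub(M)$, noting that $\Sub(M)$ is a noetherian poset precisely because $M$ is a noetherian object, and then feeding these two observations into the two lemmas in the obvious way.
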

\begin{proof}
  Combine  Lemmas~\ref{le:noeth3} and \ref{le:tilde}.
\end{proof}

\section{Gr\"obner categories}

\begin{defn}
An essentially small category $\C$  is a \emph{Gr\"obner category} if the
following holds:
\begin{enumerate}
\item The functor $\C(-,x)$ is noetherian for every $x\in\C$.
\item There is an admissible partial order on $\C(x)$ for  every $x\in\C$.
\end{enumerate}
\end{defn}

\begin{thm}\label{th:groebner}
Let $\C$ be a Gr\"obner category and $\A$ a  Grothendieck abelian
category. If $\A$ is locally noetherian, then $\Fun(\C^\op,\A)$ is
locally noetherian.
\end{thm}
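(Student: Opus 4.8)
The plan is to reduce the statement to Proposition~\ref{pr:groebner0} via Lemma~\ref{le:noeth}. By Lemma~\ref{le:noeth}, since $\A$ is locally noetherian, it suffices to show that $M[\C(-,x)]$ is noetherian for every noetherian object $M\in\A$ and every $x\in\C$. But this is exactly the conclusion of Proposition~\ref{pr:groebner0}, whose hypotheses are that $\C(-,x)$ is noetherian and that $\C(x)$ carries an admissible partial order --- and both of these hold for every $x\in\C$ precisely because $\C$ is a Gr\"obner category.

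So the proof is essentially a two-line combination: first invoke the definition of Gr\"obner category to extract, for each fixed $x\in\C$, the noetherianity of $\C(-,x)$ and the existence of an admissible partial order on $\C(x)$; then apply Proposition~\ref{pr:groebner0} to conclude that $M[\C(-,x)]$ is noetherian whenever $M$ is noetherian; finally apply Lemma~\ref{le:noeth} to deduce that $\Fun(\C^\op,\A)$ is locally noetherian. No genuine obstacle remains at this stage --- all the real work has been front-loaded into Lemmas~\ref{le:noeth3} and \ref{le:tilde} and their synthesis in Proposition~\ref{pr:groebner0}. If anything, the only point requiring a moment's care is checking that the quantifiers line up: Lemma~\ref{le:noeth} asks for noetherianity of $M[\C(-,x)]$ ranging over all noetherian $M$ and all $x$, and the Gr\"obner hypothesis supplies the needed input uniformly in $x$, so the match is immediate.

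\begin{proof}
Since $\A$ is locally noetherian, Lemma~\ref{le:noeth} reduces the
claim to showing that $M[\C(-,x)]$ is noetherian for every noetherian
object $M\in\A$ and every $x\in\C$. Fix such an $M$ and $x$. Because
$\C$ is a Gr\"obner category, the functor $\C(-,x)$ is noetherian and
$\C(x)$ admits an admissible partial order. Proposition~\ref{pr:groebner0}
then yields that $M[\C(-,x)]$ is noetherian, as required.
\end{proof}
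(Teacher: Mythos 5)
Your proof is correct and follows essentially the same route as the paper, which simply says ``Combine Lemma~\ref{le:gen} and Proposition~\ref{pr:groebner0}''; your invocation of Lemma~\ref{le:noeth} (which packages Lemma~\ref{le:gen} together with the locally noetherian hypothesis) is if anything the more precise citation. The quantifier check you mention is indeed the only point of care, and it goes through exactly as you say.
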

\begin{proof}
Combine Lemma~\ref{le:gen} and Proposition~\ref{pr:groebner0}.
\end{proof}

\begin{exm}
(1) A strongly noetherian poset (viewed as a category) is a Gr\"obner category. 

(2) The additive monoid $\bbN$ of natural numbers (viewed
as a category with a single object) is a Gr\"obner category.
Let $\A$ be the module
category of a noetherian ring $A$. Then  $\Fun(\bbN^\op,\A)$ equals the
module category of the polynomial ring in one variable  over $A$. Thus
Theorem~\ref{th:groebner} generalises Hilbert's Basis Theorem.
\end{exm}

\section{Base change}

Given functors $F,G\colon\C^\op\to\Set$, we write $F\leadsto G$
if there is a finite chain
\[F=F_0\twoheadrightarrow F_1\leftarrowtail F_2 \twoheadrightarrow
\cdots \twoheadrightarrow F_{n-1}\leftarrowtail F_{n} =G\]
of epimorphisms  and monomorphisms of functors $\C^\op\to\Set$.

\begin{defn}
  A functor $\p\colon\C\to\D$ is \emph{contravariantly
    finite}\footnote{The terminology follows that introduced by
    Auslander and Smal{\o} \cite{AS1980} for an inclusion functor.} if
  the following holds:
\begin{enumerate}
\item Every object $y\in\D$ is isomorphic to $\p(x)$ for some $x\in\C$.
\item For every object $y\in\D$ there are objects $x_1,\ldots,x_n$ in
  $\C$ such that \[\bigsqcup_{i=1}^n\C(-,x_i)\leadsto \D(\p-,y).\]
\end{enumerate}
The functor $\p$ is \emph{covariantly finite} if
$\p^\op\colon\C^\op\to\D^\op$ is contravariantly finite.
\end{defn}

Note that a composite of contravariantly finite functors is 
contravariantly finite.

\begin{lem}\label{le:contra}
Let  $f\colon\C\to\D$ be a contravariantly finite functor and $\A$
a Grothendieck abelian category. Fix $M\in\A$ and suppose that
$M[\C(-,x)]$ is noetherian for all $x\in\C$. Then $M[\D(-,y)]$ is
noetherian for all $y\in\D$.
\end{lem}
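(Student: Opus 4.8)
The plan is to reduce the statement to two separate observations: first, that the class of functors $F$ for which $M[F]$ is noetherian behaves well under the operations $\twoheadrightarrow$ and $\leftarrowtail$ appearing in the relation $\leadsto$; and second, that the representable functor $\D(\p-,y)$ can be compared, via condition (2) of contravariant finiteness, to a finite coproduct of functors $\C(-,x_i)$ whose associated objects $M[\C(-,x_i)]$ are noetherian by hypothesis.

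First I would note that the functor $M[-]$ (extending a functor $\C^\op\to\Set$ to $\C^\op\to\A$ by taking coproducts of copies of $M$ indexed by the values) is a left adjoint, hence preserves epimorphisms and arbitrary colimits; in particular it preserves finite coproducts, so $M[\bigsqcup_{i=1}^n\C(-,x_i)]\cong\bigoplus_{i=1}^n M[\C(-,x_i)]$, which is noetherian as a finite direct sum of noetherian objects. The next step is to check that ``$M[F]$ is noetherian'' is closed under the two elementary moves: if $F\twoheadrightarrow G$ then $M[F]\twoheadrightarrow M[G]$ is an epimorphism in $\A$, and a quotient of a noetherian object is noetherian; and if $G\leftarrowtail F$ is a monomorphism of $\Set$-valued functors then $M[G]\hookrightarrow M[F]$ is a monomorphism in $\A$ (here one uses that $M[-]$ applied to an injection of index sets gives a split inclusion of coproducts, compatibly with the functorial structure), and a subobject of a noetherian object is noetherian. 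Chaining these through the finite zig-zag $F_0\twoheadrightarrow F_1\leftarrowtail\cdots\leftarrowtail F_n$ shows that $F\leadsto G$ together with ``$M[F]$ noetherian'' implies ``$M[G]$ noetherian''.

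Combining the two observations: given $y\in\D$, choose $x_1,\ldots,x_n$ in $\C$ with $\bigsqcup_{i=1}^n\C(-,x_i)\leadsto\D(\p-,y)$ as in condition (2). By the first step, $M[\bigsqcup_{i=1}^n\C(-,x_i)]\cong\bigoplus_i M[\C(-,x_i)]$ is noetherian, and by the zig-zag stability just established, $M[\D(\p-,y)]$ is noetherian. Finally, condition (1) of contravariant finiteness says every $y'\in\D$ is isomorphic to $\p(x)$ for some $x\in\C$; but we actually want the conclusion for $\D(-,y)$ with $y$ an arbitrary object, and $\D(-,y)$ is isomorphic to $\D(-,\p(x))=\D(\p-,\p(x))$ composed with... wait — more directly, writing $y\cong\p(x)$ we get $\D(-,y)\cong\D(-,\p(x))$, but this is a functor on $\D^\op$, whereas $\D(\p-,\p(x))$ lives on $\C^\op$; the point is that by (2) applied with the target object $\p(x)$ we obtain $\bigsqcup_i\C(-,x_i)\leadsto\D(\p-,\p(x))$ and then reinterpret — actually the cleanest route is: apply the argument of the previous sentence to every object of $\D$, since every object is hit up to isomorphism, so $M[\D(\p-,y)]$ noetherian for all $y$ gives, after composing with the essentially surjective $\p$, that $M[\D(-,y)]$ is noetherian for all $y\in\D$.

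I expect the main obstacle to be the verification that a monomorphism $F\leftarrowtail G$ of $\Set$-valued functors induces a monomorphism $M[G]\to M[F]$ in $\A$, and more precisely that this behaves well enough that no exactness subtlety is lost — but since $M[-]$ sends an injection of sets to the inclusion of a sub-coproduct (a split mono), and colimits in the Grothendieck category $\A$ are computed pointwise on $\C^\op$, this is straightforward. The genuinely substantive input is entirely contained in the hypothesis and in Lemma~\ref{le:gen}-style reasoning about coproducts; the relation $\leadsto$ is precisely designed so that the noetherianity of $M[-]$ transports along it, so the proof is essentially a matter of assembling these pieces in the right order.
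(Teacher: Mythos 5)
Your proposal follows the same basic strategy as the paper's proof: apply $M[-]$ to the zig-zag from condition~(2) of contravariant finiteness, observe that $M[-]$ preserves epis and monos (pointwise, since colimits in the functor category are computed objectwise), and use that noetherianity passes to quotients and subobjects. That part is sound, modulo a small slip in arrow direction: in the zig-zag the step $F_i\leftarrowtail F_{i+1}$ means $F_{i+1}\hookrightarrow F_i$, so one needs $M[F_{i+1}]\hookrightarrow M[F_i]$ and then that a \emph{subobject} of the noetherian $M[F_i]$ is noetherian; your second bullet has $F$ and $G$ transposed relative to this.

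The genuine gap is in the final step, and you yourself signal uncertainty there. Having shown $M[\D(\p-,y)]$ is noetherian, you have an object of $\Fun(\C^\op,\A)$, whereas the lemma asserts that $M[\D(-,y)]$ is noetherian as an object of $\Fun(\D^\op,\A)$. These are related by the restriction functor $\p^*\colon\Fun(\D^\op,\A)\to\Fun(\C^\op,\A)$, which carries $M[\D(-,y)]$ to $M[\D(\p-,y)]$. Your paragraph circles around this (``after composing with the essentially surjective $\p$\dots'') but never states the fact that actually closes the argument: because $\p$ is essentially surjective by condition~(1), $\p^*$ is \emph{faithful}, and it is \emph{exact} since limits and colimits in these functor categories are pointwise. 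A faithful exact functor reflects proper inclusions --- a proper subobject $X_1\subsetneq X_2$ has $X_2/X_1\neq 0$, whence $\p^*(X_2/X_1)\neq 0$, whence $\p^*(X_1)\subsetneq\p^*(X_2)$ --- and hence reflects the ascending chain condition. This is exactly the content of the paper's closing sentence, and it is the piece your argument is missing.
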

\begin{proof}
A finite chain
\[\bigsqcup_{i=1}^n \C(-,x_i)=F_0\twoheadrightarrow F_1\leftarrowtail F_2 \twoheadrightarrow
\cdots \twoheadrightarrow F_{n-1}\leftarrowtail F_{n} =\D(\p-,y)\] of
epimorphisms and monomorphisms induces
a chain
\[\coprod_{i=1}^n M[\C(-,x_i)]=\bar F_0\twoheadrightarrow \bar
F_1\leftarrowtail \bar F_2 \twoheadrightarrow \cdots
\twoheadrightarrow \bar F_{n-1}\leftarrowtail \bar F_{n}
=M[\D(\p-,y)]\] of epimorphisms and monomorphisms in
$\Fun(\C^\op,\A)$. Thus $M[\D(\p-,y)]$ is noetherian. It follows that
$M[\D(-,y)]$ is noetherian, since precomposition with $\p$ yields a
faithful and exact functor $\Fun(\D^\op,\A)\to \Fun(\C^\op,\A)$.
\end{proof}

\begin{prop}\label{pr:contra}
Let  $f\colon\C\to\D$ be a contravariantly finite functor and $\A$
a locally noetherian Grothendieck abelian category. If the category
$\Fun(\C^\op,\A)$ is locally noetherian, then $\Fun(\D^\op,\A)$ is locally noetherian.
\end{prop}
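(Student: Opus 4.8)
The plan is to reduce the statement about $\Fun(\D^\op,\A)$ being locally noetherian to the already-established criterion in Lemma~\ref{le:noeth}, using Lemma~\ref{le:contra} to transport the noetherianity of the generating functors from $\C$ to $\D$. First I would recall that, by hypothesis, $\A$ is locally noetherian and $\Fun(\C^\op,\A)$ is locally noetherian. By Lemma~\ref{le:noeth} applied to $\C$, the latter means precisely that $M[\C(-,x)]$ is noetherian for every noetherian object $M\in\A$ and every $x\in\C$. The goal, again by Lemma~\ref{le:noeth} but now applied to $\D$, is to show that $M[\D(-,y)]$ is noetherian for every noetherian $M\in\A$ and every $y\in\D$.

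Given a noetherian $M\in\A$, fix this $M$ and apply Lemma~\ref{le:contra} to the contravariantly finite functor $f\colon\C\to\D$: since $M[\C(-,x)]$ is noetherian for all $x\in\C$, the lemma gives that $M[\D(-,y)]$ is noetherian for all $y\in\D$. This holds for every noetherian $M$, so the criterion of Lemma~\ref{le:noeth} for the category $\D$ is met, and therefore $\Fun(\D^\op,\A)$ is locally noetherian.

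The proof is essentially a two-line chaining of Lemmas~\ref{le:contra} and \ref{le:noeth}, so there is no genuine obstacle; the only point requiring minimal care is the quantifier bookkeeping — one must apply Lemma~\ref{le:contra} separately for each noetherian $M$ (the lemma is stated with $M$ fixed), and then feed the conclusion back into Lemma~\ref{le:noeth}, whose hypothesis ranges over all noetherian $M\in\A$. Since $\A$ is assumed locally noetherian throughout, all the hypotheses of both lemmas are in force.

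\begin{proof}
Let $M\in\A$ be noetherian. Since $\Fun(\C^\op,\A)$ is locally
noetherian, Lemma~\ref{le:noeth} shows that $M[\C(-,x)]$ is
noetherian for every $x\in\C$. As $f\colon\C\to\D$ is contravariantly
finite, Lemma~\ref{le:contra} yields that $M[\D(-,y)]$ is noetherian
for every $y\in\D$. This holds for every noetherian $M\in\A$, so by
Lemma~\ref{le:noeth} the category $\Fun(\D^\op,\A)$ is locally
noetherian.
\end{proof}
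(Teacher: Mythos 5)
Your proof is correct and follows exactly the paper's approach, which simply says ``Combine Lemmas~\ref{le:noeth} and \ref{le:contra}.'' You have merely spelled out the quantifier bookkeeping that the paper leaves implicit.
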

\begin{proof}
Combine Lemmas~\ref{le:noeth} and \ref{le:contra}.
\end{proof}

\section{Categories of finite sets}

Let $\Ga$ denote the category of finite sets (a skeleton is given by
the sets $\mathbf n= \{1,2,\ldots,n\}$). The subcategory of finite
sets with surjective morphisms is denoted by $\Ga_\sur$. A surjection
$f\colon\mathbf m\to\mathbf n$ is \emph{ordered} if $i<j$ implies
$\min f^{-1}(i)<\min f^{-1}(j)$. We write $\Ga_\os$ for the
subcategory of finite sets whose morphisms are ordered
surjections. Given a surjection $f\colon\mathbf m\to\mathbf n$, let
$f^!\colon\mathbf n\to\mathbf m$ denote the map given by $f^!(i)=\min
f^{-1}(i)$. Note that $ff^!=\id$, and $gf=f^!g^!$ provided that $f$
and $g$ are ordered surjections.

\begin{lem}\label{le:contra1}
\begin{enumerate}
\item The inclusion $\Ga_\sur\to\Ga$ is contravariantly finite. 
\item The inclusion $\Ga_\os\to\Ga_\sur$ is contravariantly finite. 
\end{enumerate}
\end{lem}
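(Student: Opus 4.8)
The plan is to verify the two conditions in the definition of contravariantly finite for each inclusion. Condition (1), essential surjectivity on objects, is immediate in both cases since all three categories $\Ga$, $\Ga_\sur$, $\Ga_\os$ have the same objects (the finite sets). So the work is entirely in condition (2): for a fixed finite set $y$, one must produce finitely many objects $x_1,\dots,x_n$ and a chain $\bigsqcup_i \C(-,x_i) \leadsto \D(-,y)$ of alternating epimorphisms and monomorphisms, where $(\C,\D)$ is $(\Ga_\sur,\Ga)$ in part (1) and $(\Ga_\os,\Ga_\sur)$ in part (2).

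For part (1): given a finite set $y$, I would stratify an arbitrary map $f\colon t\to y$ in $\Ga$ by its image. Writing $\Ga(t,y)=\bigsqcup_{S\subseteq y}\{f\colon t\to y\mid \Im f = S\}$, each stratum is naturally $\Ga_\sur(t,S)$, and as $S$ ranges over the (finitely many) subsets of $y$ we get a decomposition. Concretely, the inclusions $S\hookrightarrow y$ induce a morphism of functors $\bigsqcup_{S\subseteq y}\Ga_\sur(-,S)\to\Ga(-,y)$ which is surjective (every map factors through its image as a surjection followed by an inclusion) but generally not injective (the factorization is unique, so actually this \emph{is} a bijection onto $\Ga(-,y)$ restricted to the subcategory $\Ga_\sur$ — one must be careful that the target functor is $\Ga(\p-,y)$ with $\p\colon\Ga_\sur\to\Ga$ the inclusion, i.e. we only evaluate on objects of $\Ga_\sur$, which is all finite sets, but morphisms are restricted). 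The cleanest route: show the canonical map $\bigsqcup_{S\subseteq y}\Ga_\sur(-,S)\to\Ga(\p-,y)$ is an isomorphism of functors $\Ga_\sur^\op\to\Set$, using that every morphism of finite sets has a unique epi–mono factorization; taking $x_S=S$ for $S\subseteq y$ gives a chain of length $0$, hence $\leadsto$ trivially.

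For part (2): given a finite set $y$, I would stratify a surjection $f\colon t\to y$ in $\Ga_\sur$ according to the linear order it induces on $y$ via $i\preceq j \iff \min f^{-1}(i)\le \min f^{-1}(j)$. For each total order $\omega$ on $y$ there is an order-isomorphism $(y,\omega)\cong \mathbf{|y|}$, and composing with it turns $f$ into an \emph{ordered} surjection. This suggests the map $\bigsqcup_{\omega}\Ga_\os(-,\mathbf{|y|})\to\Ga_\sur(\p-,y)$ (one summand per total order $\omega$ on $y$, finitely many) built from the maps $f\mapsto \sigma_\omega\circ f$ where $\sigma_\omega\colon\mathbf{|y|}\to y$ is the relabelling; this is surjective since any surjection becomes ordered after relabelling its codomain appropriately, and it is injective because the ordered structure is recovered from $f$ — so again an isomorphism, and the chain has length $0$. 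The key identity to invoke here is the stated relation $gf = f^! g^!$ for ordered surjections, ensuring that the stratification is compatible with precomposition (i.e. $\Ga_\os^\op$-equivariant); this is exactly what makes the displayed map a morphism of functors rather than just a family of bijections.

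The main obstacle is purely bookkeeping: one must keep straight that in condition (2) the target is the functor $t\mapsto\D(\p(t),y)$ on $\C^\op$, so that although $\C$ and $\D$ share objects, the morphism sets differ, and the stratifying bijections must be checked to be natural with respect to the \emph{smaller} morphism class. Once the epi–mono factorization (for part 1) and the order-relabelling normal form (for part 2) are set up with the compatibility $gf=f^!g^!$, both maps turn out to be isomorphisms of functors, so no genuine chain of length $>0$ is needed and the $\leadsto$ relation holds for the trivial reason.
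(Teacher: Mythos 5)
Your proposal is correct and follows essentially the same route as the paper: in part (1) the paper decomposes $\Ga(-,\mathbf n)$ as $\bigsqcup_{\mathbf m\hookrightarrow\mathbf n}\Ga_\sur(-,\mathbf m)$ via epi–mono factorization (your decomposition by image $S\subseteq y$ is the same thing), and in part (2) the paper exhibits an isomorphism $\Ga_\os(-,\mathbf n)\times\mathfrak S_n\xto{\sim}\Ga_\sur(-,\mathbf n)$, $(f,\sigma)\mapsto\sigma f$ (your indexing by total orders on $y$ is the same data as $\mathfrak S_n$, and your $\sigma_\omega$ is the permutation $\tau$ the paper extracts from $g$). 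Two small remarks: the identity you invoke for naturality should read $(gf)^!=f^!g^!$ (the paper itself has a typo, dropping the $(\cdot)^!$ on the left), and an isomorphism of functors gives a $\leadsto$-chain of length one (an epi that happens to be iso) rather than literally length zero, but this is only a matter of phrasing.
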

\begin{proof}
(1) For each integer $n\ge 0$ there is an isomorphism 
\[\bigsqcup_{\mathbf m\hookrightarrow\mathbf n}\Ga_\sur(-,\mathbf
m)\xto{\sim}\Ga(-,\mathbf n) \]
which is induced by the injective maps $\mathbf m\to\mathbf n$.

(2) For each integer $n\ge 0$ there is an isomorphism
\[\Ga_\os(-,\mathbf n) \times\mathfrak S_n\xto{\sim}\Ga_\sur(-,\mathbf n)\]
which sends a pair $(f,\s)$ to $\s f$. The inverse sends a surjective
map $g\colon\mathbf m\to\mathbf n$ to $(\t^{-1}g,\t)$ where
$\t\in\mathfrak S_n$ is the unique permutation such that $g^!\t$ is
increasing.
\end{proof}

Fix an integer $n\ge 0$. Given $f,g\in\Ga(\mathbf n)$ we set $f\le g$
if there exists an ordered surjection $h$ such that $f=gh$.

\begin{lem}\label{le:higman}
 The poset $(\Ga(\mathbf n),\le)$ is
  strongly noetherian.
\end{lem}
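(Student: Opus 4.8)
The plan is to verify condition~(4) of Lemma~\ref{le:stronglynoeth} for the poset $(\Ga(\mathbf n),\le)$; that is, given an arbitrary sequence $(f_i)_{i\in\bbN}$ in $\Ga(\mathbf n)$ we must find $i<j$ with $f_j\le f_i$, and it clearly suffices to do this after passing to a subsequence. Throughout I identify an element $f$ of $\Ga(\mathbf n)$, say $f\in\Ga(\mathbf m,\mathbf n)$, with the word $w_f=f(1)f(2)\cdots f(m)$ over the alphabet $\mathbf n$, and I call $k\in\mathbf m$ a \emph{leading position} of $f$ when $f(k)\notin\{f(1),\dots,f(k-1)\}$, and a \emph{repeated position} otherwise.

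The combinatorial core is the following description of the order: $f\le g$ if and only if $w_g$ is obtained from $w_f$ by deleting the letters at some set of repeated positions. For the forward direction, write $f=gh$ with $h$ an ordered surjection; then $g=g(hh^!)=fh^!$, so $w_g$ is the subword of $w_f$ read off the strictly increasing list $h^!(1)<h^!(2)<\cdots$, and a position $k$ not on this list lies in $h^{-1}(i)$ with $i=h(k)$, whence $h^!(i)<k$ and $f(k)=f(h^!(i))$, making $k$ a repeated position whose letter already occurs on the list. Conversely, from a subword of $w_f$ whose kept positions $q_1<\cdots<q_b$ include every leading position one builds an ordered surjection $h$ with $f=gh$ by sending each $k\in\mathbf m$ to the index $i$ for which $q_i$ is the largest kept position $\le k$ with $f(q_i)=f(k)$ (this $i$ exists precisely because all leading positions are kept), where $g$ is the element with word $f(q_1)\cdots f(q_b)$.

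Next I reduce to a product of posets of words. There are only finitely many injective words over $\mathbf n$, so after passing to a subsequence we may assume the word obtained by reading the values of $f_i$ at its leading positions is one and the same word $\s_1\cdots\s_c$ for every $i$. The assignment $\s_\ell\mapsto\ell$ extends to a permutation $\s$ of $\mathbf n$, and $f\mapsto\s\comp f$ is an automorphism of $(\Ga(\mathbf n),\le)$; replacing $f_i$ by $\s\comp f_i$ we may assume $\s_\ell=\ell$, so that each $f_i$ is an ordered surjection onto $\mathbf c=\{1,\dots,c\}$ and its word factors uniquely as $w_{f_i}=1\,v^{(i)}_1\,2\,v^{(i)}_2\cdots c\,v^{(i)}_c$, with each $v^{(i)}_\ell$ a word over $\{1,\dots,\ell\}$ and the explicit letters $1,\dots,c$ occupying exactly the leading positions. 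Since the leading positions are now forced (they carry the letters $1,\dots,c$ in order for every $f_i$), the description of $\le$ from the previous paragraph specialises to: $f_j\le f_i$ if and only if $v^{(i)}_\ell$ is a subword of $v^{(j)}_\ell$ for every $\ell\in\{1,\dots,c\}$.

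It thus remains to find $i<j$ such that $v^{(i)}_\ell$ is a subword of $v^{(j)}_\ell$ for all $\ell$ simultaneously, i.e.\ that the product over $\ell\in\{1,\dots,c\}$ of the posets of words over $\{1,\dots,\ell\}$ under the subword order is a well-quasi-order. This follows from Higman's lemma (each factor is a well-quasi-order since its alphabet is finite) together with the stability of well-quasi-orders under finite products. The one genuinely delicate step is the combinatorial identification of $\le$ in the second paragraph, and the ensuing observation that once the sequence is normalised so that the leading positions are rigid, the comparison $f_j\le f_i$ decouples into independent subword comparisons to which Higman's lemma applies; the two quoted facts about well-quasi-orders are classical, and for a self-contained treatment the whole lemma can instead be derived directly from the Nash--Williams minimal-bad-sequence argument.
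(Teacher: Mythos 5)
Your proof is correct, but it is organised quite differently from the paper's. The paper runs a Nash--Williams minimal-bad-sequence argument directly on $(\Ga(\mathbf n),\le)$: it deletes the last repeated position of $f$ to form $\tilde f$, records the deleted position and letter via the invariants $\m(f)$ and $\pi(f)$, verifies $f\le\tilde f$ and the lifting property ($\m(f)=\m(g)$, $\pi(f)=\pi(g)$, $\tilde f\le\tilde g$ imply $f\le g$), and then derives a contradiction from a minimal bad sequence after a pigeonhole on $(\m,\pi)$. You instead first prove an explicit combinatorial characterisation of the order ($f\le g$ iff $w_g$ arises from $w_f$ by deleting repeated positions), which the paper never makes explicit, and then normalise the sequence so that the comparison decouples into $c$ independent subword comparisons, reducing the lemma to Higman's lemma plus closure of well-quasi-orders under finite products. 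Both reductions check out: the verification that the map $h$ you construct is an ordered surjection with $h^!(i)=q_i$ is sound, and the block decomposition $w_{f_i}=1\,v^{(i)}_1\cdots c\,v^{(i)}_c$ does match up under kept positions because the first occurrence of each letter among kept positions must be its leading position. What your route buys is transparency -- the poset is exhibited, up to finite bookkeeping, as a product of subword posets, so the result is visibly an instance of Higman's lemma; what it costs is self-containedness, since you quote Higman's lemma and the product stability of well-quasi-orders as black boxes, whereas the paper's bespoke minimal-bad-sequence argument proves the needed Higman-type statement from scratch (and is, in substance, the standard proof of Higman's lemma adapted to this poset). Your closing remark that the whole lemma could be done via Nash--Williams is exactly what the paper does.
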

\begin{proof}
  We fix some notation for each $f\in\Ga(\mathbf m,\mathbf n)$. Set
  $\la (f)=m$. If $f$ is not injective, set
\[\m(f)=m-\max\{i\in\mathbf m\mid \text{there exists $j<i$ such that
}f(i)=f(j)\}\] and $\pi(f)=f(m-\m(f))$.  Define $\tilde
f\in\Ga(\mathbf{m -1},\mathbf n)$ by setting $\tilde f(i)=f(i)$ for
$i< m-\m(f)$ and $\tilde f(i)=f(i+1)$ otherwise.

Note that $f\le\tilde f$. Moreover, $\m(f)=\m(g)$, $\pi(f)=\pi(g)$,
and $\tilde f\le\tilde g$ imply $f\le g$.

Suppose that $(\Ga(\mathbf n),\le)$ is not strongly noetherian. Then
there exists an infinite sequence $(f_r)_{r\in\bbN}$ in $\Ga(\mathbf
n)$ such that $i<j$ implies $f_j\not\le f_i$; see
Lemma~\ref{le:stronglynoeth}. Call such a sequence \emph{bad}. Choose
the sequence \emph{minimal} in the sense that $\la (f_i)$ is minimal
for all bad sequences $(g_r) _{r\in\bbN}$ with $g_j=f_j$ for all
$j<i$. There is an infinite subsequence $(f_{\a(r)})_{r\in\bbN}$
(given by some increasing map $\a\colon\bbN\to\bbN$) such that $\m$
and $\pi$ agree on all $f_{\a(r)}$, since the values of $\m$ and $\pi$
are bounded by $n$. Now consider the sequence
$f_0,f_1,\ldots,f_{\a(0)-1},\tilde f_{\a(0)},\tilde f_{\a(1)},\ldots$
and denote this by $(g_r) _{r\in\bbN}$. This sequence is not bad,
since $(f_r)_{r\in\bbN}$ is minimal. Thus there are $i<j$ in $\bbN$
with $g_j\le g_i$. Clearly, $j<\a(0)$ is impossible. If $i<\a(0)$,
then
\[ f_{\a(j-\a(0))}\le \tilde f_{\a(j-\a(0))}=g_j\le g_i=f_i,\] which
is a contradiction, since $i<\a(0)\le\a(j-\a(0))$. If $i\ge \a(0)$,
then $f_{\a(j-\a(0))}\le f_{\a(i-\a(0))}$; this is a contradiction
again. Thus $(\Ga(\mathbf n),\le)$ is strongly noetherian.
\end{proof}

\begin{prop}\label{pr:groebner}
The category $\Ga_\os$ is a Gr\"obner category.
\end{prop}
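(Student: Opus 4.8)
The plan is to verify the two defining conditions of a Gröbner category for $\Ga_\os$: that $\Ga_\os(-,\mathbf n)$ is noetherian for every $\mathbf n$, and that $\Ga_\os(\mathbf n)$ carries an admissible partial order. By Lemma~\ref{le:repnoeth}, the first condition is equivalent to the poset $\bar\Ga_\os(\mathbf n)$ being strongly noetherian. I would first observe that the morphisms in $\Ga_\os$ into $\mathbf n$ are precisely the ordered surjections $\mathbf m\to\mathbf n$, and that for two such surjections $f$ and $g$ the relation $f\le_{\mathbf n} g$ (factorisation in $\Ga_\os(\mathbf n)$) unwinds to: $f=g h$ for some ordered surjection $h$. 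This is exactly the relation $\le$ on $\Ga(\mathbf n)$ introduced just before Lemma~\ref{le:higman} — with the caveat that $\Ga(\mathbf n)$ there ranges over \emph{all} surjections, not just ordered ones. So the first task is to check that restricting to ordered surjections changes nothing: since every surjection factors uniquely as $\s f$ with $f$ ordered and $\s$ a permutation (as in the proof of Lemma~\ref{le:contra1}(2)), and permutations are isomorphisms, the poset $\bar\Ga_\os(\mathbf n)$ is isomorphic to $(\Ga(\mathbf n),\le)$, which is strongly noetherian by Lemma~\ref{le:higman}. Hence condition (1) holds.

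For condition (2) I must produce an admissible partial order on $\Ga_\os(\mathbf n)=\bigsqcup_{\mathbf m}\Ga_\os(\mathbf m,\mathbf n)$, i.e.\ a partial order whose restriction to each $\Ga_\os(\mathbf m,\mathbf n)$ is total and noetherian, and which is compatible with precomposition: $f<f'$ in $\Ga_\os(\mathbf m,\mathbf n)$ and $e\in\Ga_\os(\mathbf m',\mathbf m)$ should force $fe<f'e$. The natural candidate is a lexicographic order: view an ordered surjection $f\colon\mathbf m\to\mathbf n$ as the word $(f(1),f(2),\ldots,f(m))$ over the alphabet $\mathbf n$, and order these words lexicographically (from the left). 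On a fixed $\Ga_\os(\mathbf m,\mathbf n)$ this is a total order on a finite set, hence trivially noetherian. For compatibility, if $f<f'$ differ first at position $k$, then for an ordered surjection $e\colon\mathbf m'\to\mathbf m$ the composites $fe$ and $f'e$ agree at every position $\ell$ with $e(\ell)<k$ and first differ at the position $\ell_0=\min e^{-1}(k)$ (which exists since $e$ is surjective), where $fe(\ell_0)=f(k)<f'(k)=f'e(\ell_0)$; so $fe<f'e$. I should double‑check that "first differ" is correct — that is, $fe$ and $f'e$ really do agree strictly before $\ell_0$: for $\ell<\ell_0$ we have $e(\ell)\le k$, and if $e(\ell)=k$ then $\ell\ge\min e^{-1}(k)=\ell_0$, contradiction, so $e(\ell)<k$ and $fe(\ell)=f(e(\ell))=f'(e(\ell))=f'e(\ell)$. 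Thus the lexicographic order is admissible.

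I expect the main obstacle to be exactly the bookkeeping in the last paragraph: getting the interaction between the lexicographic order and precomposition by an \emph{ordered} surjection right, in particular pinning down which position of the composite is the first point of disagreement and confirming it lies in the right half of the word. The identification $\bar\Ga_\os(\mathbf n)\cong(\Ga(\mathbf n),\le)$ in the first paragraph is conceptually clean but also needs a small check that the unique factorisation $g=\s f$ is functorial enough to give an isomorphism of posets rather than merely a bijection of underlying sets — concretely, that $f\le_{\mathbf n} f'$ among ordered surjections iff the corresponding (general) surjections satisfy $\le$, which follows since $h$ ordered and $f,f'$ ordered already forces any witnessing factorisation $f=f'h$ to have $h$ ordered. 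Once both conditions are in place, $\Ga_\os$ is a Gröbner category by definition, completing the proof.
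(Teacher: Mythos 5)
Your proposal is correct and takes essentially the same route as the paper: condition (1) is reduced via Lemma~\ref{le:repnoeth} to Lemma~\ref{le:higman}, and condition (2) is settled by the lexicographic order, for which you supply the precomposition‑compatibility check that the paper leaves implicit (your bookkeeping with $\ell_0=\min e^{-1}(k)$ is right, and the orderedness of $e$ is exactly what rules out $e(\ell)>k$ for $\ell<\ell_0$). One misstatement in your first paragraph: $\bar\Ga_\os(\mathbf n)$ is not \emph{isomorphic} to $(\Ga(\mathbf n),\le)$ --- the latter consists of all maps into $\mathbf n$, not only surjections, and even on surjections the factorisation $g=\s f$ gives a $|\mathfrak S_n|$\nobreakdash-to\nobreakdash-one map rather than a bijection; what you actually have, and all you need, is that $\bar\Ga_\os(\mathbf n)$ is a subposet of $(\Ga(\mathbf n),\le)$ with the induced order (the two relations restricted to ordered surjections coincide, since both require the witnessing $h$ to be an ordered surjection), and strong noetherianity passes to subposets by criterion (4) of Lemma~\ref{le:stronglynoeth}.
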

\begin{proof}
  Fix an integer $n\ge 0$. The poset $\bar \Ga_\os(\mathbf n)$ is
  strongly noetherian by Lemma~\ref{le:higman}, and it follows from
  Lemma~\ref{le:repnoeth} that the functor $\Ga_\os(-,\mathbf n)$ is
  noetherian.

The admissible partial order on $\Ga_\os(\mathbf n)$ is given by the
lexicographic order. Thus for $f,g\in\Ga_\os(\mathbf m,\mathbf n)$, we
have $f<g$ if there exists $j\in\mathbf m$ with $f(j)<g(j)$ and
$f(i)=g(i)$ for all $i<j$.
\end{proof}

\begin{thm}\label{th:sets}
  Let $\A$ be a locally noetherian Grothendieck abelian category. Then
  the category $\Fun(\Ga^\op,\A)$ is locally noetherian.
\end{thm}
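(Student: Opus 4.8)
The plan is to deduce Theorem~\ref{th:sets} from the machinery already in place, in two stages. First, by Theorem~\ref{th:groebner}, the functor category $\Fun(\Ga_\os^\op,\A)$ is locally noetherian, since $\Ga_\os$ is a Gr\"obner category by Proposition~\ref{pr:groebner} and $\A$ is locally noetherian by hypothesis. Second, by Proposition~\ref{pr:contra}, to transfer this conclusion along a chain of categories it suffices to produce contravariantly finite functors $\Ga_\os\to\Ga_\sur\to\Ga$. Both are furnished by Lemma~\ref{le:contra1}: the inclusion $\Ga_\os\to\Ga_\sur$ is contravariantly finite, and so is the inclusion $\Ga_\sur\to\Ga$. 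Since a composite of contravariantly finite functors is again contravariantly finite (as noted in the excerpt), the composite inclusion $\Ga_\os\to\Ga$ is contravariantly finite.

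Carrying this out, I would first apply Proposition~\ref{pr:contra} to the inclusion $\Ga_\os\to\Ga_\sur$: from $\Fun(\Ga_\os^\op,\A)$ locally noetherian we get $\Fun(\Ga_\sur^\op,\A)$ locally noetherian. Then I would apply Proposition~\ref{pr:contra} once more to the inclusion $\Ga_\sur\to\Ga$, obtaining that $\Fun(\Ga^\op,\A)$ is locally noetherian, which is exactly the assertion. Alternatively one invokes Proposition~\ref{pr:contra} a single time for the composite $\Ga_\os\to\Ga$; either route works, and I would present the two-step version since the pieces are already isolated as Lemma~\ref{le:contra1}(1) and (2).

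There is essentially no obstacle here: every ingredient has been assembled in the preceding sections, and the proof is a three-line bookkeeping argument combining Proposition~\ref{pr:groebner}, Theorem~\ref{th:groebner}, Lemma~\ref{le:contra1}, and Proposition~\ref{pr:contra}. The only point requiring any care — and it was already handled in Lemma~\ref{le:higman} and Proposition~\ref{pr:groebner}, which are the genuine heart of the matter — is the verification that $\Ga_\os$ is Gr\"obner, i.e.\ the Higman-type argument showing strong noetherianity of $(\Ga(\mathbf n),\le)$ together with the admissibility of the lexicographic order. Given those, the present theorem is immediate, so I would simply write: ``Combine Propositions~\ref{pr:groebner} and \ref{pr:contra}, Theorem~\ref{th:groebner}, and Lemma~\ref{le:contra1}.''
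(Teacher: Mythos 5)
Your proposal is correct and follows essentially the same route as the paper: show $\Fun(\Ga_\os^\op,\A)$ is locally noetherian via Proposition~\ref{pr:groebner} and Theorem~\ref{th:groebner}, then transfer along $\Ga_\os\to\Ga$ using Lemma~\ref{le:contra1} and Proposition~\ref{pr:contra}. Your explicit two-step factorization $\Ga_\os\to\Ga_\sur\to\Ga$ is exactly what the paper's invocation of Lemma~\ref{le:contra1} (together with closure of contravariant finiteness under composition) amounts to.
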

\begin{proof}
  The category $\Ga_\os$ is a Gr\"obner category by
  Proposition~\ref{pr:groebner}. It follows from Theorem~\ref{th:groebner}
  that $\Fun((\Ga_\os)^\op,\A)$ is locally noetherian.  The inclusion
  $\Ga_\os\to\Ga$ is contravariantly finite by
  Lemma~\ref{le:contra1}. Thus $\Fun(\Ga^\op,\A)$ is locally
  noetherian by Proposition~\ref{pr:contra}.
\end{proof}

\section{The artinian conjecture}

Let $A$ be a ring. We denote by $\P(A)$ the category of free
$A$-modules of finite rank. If $A$ is finite, then the functor
$\Ga\to\P(A)$ sending $X$ to $A[X]$ is a left adjoint of the forgetful
functor $\P(A)\to\Ga$.

\begin{lem}\label{le:contra2}
Let $A$ be finite. Then the  functor $\Ga\to\P(A)$ is contravariantly
finite.
\end{lem}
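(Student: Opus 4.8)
The goal is to verify that the functor $\p\colon\Ga\to\P(A)$, $X\mapsto A[X]$, satisfies the two conditions in the definition of contravariantly finite. Condition (1) is immediate: every free $A$-module of finite rank is isomorphic to $A^n=A[\mathbf n]=\p(\mathbf n)$ for some $n$. The real content is condition (2): for each $y=A^n$ in $\P(A)$ we must produce finitely many finite sets $x_1,\ldots,x_k$ together with a chain $\bigsqcup_i\Ga(-,x_i)\leadsto\P(A)(\p-,A^n)$ of epimorphisms and monomorphisms of functors $\Ga^\op\to\Set$.

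First I would identify the functor $\P(A)(\p-,A^n)$ concretely. By the adjunction $\P(A)(A[X],A^n)\cong\Ga(X,A^n)$ — using that the underlying set of $A^n$ is the finite set $A^n$ itself — so $\P(A)(\p-,A^n)$ is naturally isomorphic to the representable functor $\Ga(-,A^n)$ on $\Ga^\op$, where $A^n$ now means the \emph{finite set} of $n$-tuples, of cardinality $|A|^n$. Thus condition (2) is trivially satisfied with a single $x_1=A^n$ (viewed as an object of $\Ga$) and the chain of length zero given by the identity isomorphism $\Ga(-,A^n)\xto{\sim}\P(A)(\p-,A^n)$. In other words, the point is simply that $\p$ has a left... wait — $\p$ \emph{is} itself a left adjoint, so $\P(A)(\p X,Y)\cong\Ga(X,UY)$ where $U\colon\P(A)\to\Ga$ is the forgetful functor, and $UY$ is again an object of $\Ga$; hence $\P(A)(\p-,Y)\cong\Ga(-,UY)$ is representable. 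This is where the finiteness of $A$ is used: it guarantees $UY=U(A^n)$ lands in $\Ga$ rather than merely in the category of all sets.

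So the proof is short: verify (1) by surjectivity of $\p$ on objects up to isomorphism, then for (2) observe that the adjunction between $\p$ and the forgetful functor $U\colon\P(A)\to\Ga$ gives a natural isomorphism $\P(A)(\p-,Y)\cong\Ga(-,UY)$, and since $A$ is finite, $UY$ is an object of $\Ga$, so the right-hand side is representable; the chain required in the definition is then the length-zero chain consisting of this single isomorphism.

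I do not expect any genuine obstacle here; the only thing to be careful about is the bookkeeping in the definition of $\leadsto$ — confirming that a bare isomorphism $\Ga(-,UY)\xto{\sim}\P(A)(\p-,Y)$ counts as an admissible chain (it does: take $n=0$, or $n=1$ with both maps the isomorphism). It is also worth noting explicitly, as the paper does in the remark after the definition, that once Lemma~\ref{le:contra2} is established the first main Theorem follows by composing the contravariantly finite functors $\Ga_\os\to\Ga_\sur\to\Ga\to\P(A)$ and applying Proposition~\ref{pr:contra} together with Proposition~\ref{pr:groebner}.
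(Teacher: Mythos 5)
Your proof is correct and is essentially the paper's own argument: the paper likewise deduces the claim from the adjointness isomorphism $\P(A)(A[X],P)\cong\Ga(X,P)$, which exhibits $\P(A)(\p-,P)$ as the representable functor $\Ga(-,UP)$ with $UP$ a finite set because $A$ is finite. Your extra care about the length-zero chain and condition (1) is fine but not needed beyond what the paper records.
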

\begin{proof}
The assertion follows from the adjointness isomorphism
\[\P(A)(A[X],P)\cong\Ga(X,P).\qedhere\]
\end{proof}

\begin{thm}
Let $A$ be a finite ring and $\A$ a locally noetherian Grothendieck
abelian category. Then the category $\Fun(\P(A)^\op,\A)$ is locally noetherian.
\end{thm}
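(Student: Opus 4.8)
The plan is to deduce this directly from the theorem on finite sets together with the base-change machinery already set up. The chain of implications is short: Theorem~\ref{th:sets} gives that $\Fun(\Ga^\op,\A)$ is locally noetherian, Lemma~\ref{le:contra2} identifies $\Ga\to\P(A)$ as a contravariantly finite functor, and Proposition~\ref{pr:contra} transports local noetherianity along such functors. So essentially no new work is needed; the theorem is a formal consequence of what precedes it.

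Concretely, first I would invoke Theorem~\ref{th:sets} to record that $\Fun(\Ga^\op,\A)$ is locally noetherian, using that $\A$ is a locally noetherian Grothendieck abelian category. Next I would observe that since $A$ is finite, the left adjoint $\Ga\to\P(A)$, $X\mapsto A[X]$, of the forgetful functor is contravariantly finite by Lemma~\ref{le:contra2}. Finally I would apply Proposition~\ref{pr:contra} to the contravariantly finite functor $\Ga\to\P(A)$: since $\Fun(\Ga^\op,\A)$ is locally noetherian, so is $\Fun(\P(A)^\op,\A)$.

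I do not expect any genuine obstacle here, since all the difficulty was absorbed earlier — in particular in Lemma~\ref{le:higman} (the Higman-type argument showing $(\Ga(\mathbf n),\le)$ is strongly noetherian) and in Proposition~\ref{pr:groebner} showing $\Ga_\os$ is a Gr\"obner category. The only point worth a moment's care is making sure the hypotheses of Proposition~\ref{pr:contra} are literally met: $\Ga$ and $\P(A)$ are essentially small (a skeleton of $\P(A)$ is given by the modules $A^n$, $n\ge 0$, using finiteness of $A$ only for Lemma~\ref{le:contra2}, not for essential smallness), and $\A$ is assumed locally noetherian. With those remarks in place the proof is complete.

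\begin{proof}
By Theorem~\ref{th:sets} the category $\Fun(\Ga^\op,\A)$ is locally noetherian. Since $A$ is finite, the functor $\Ga\to\P(A)$, $X\mapsto A[X]$, is contravariantly finite by Lemma~\ref{le:contra2}. It follows from Proposition~\ref{pr:contra} that $\Fun(\P(A)^\op,\A)$ is locally noetherian.
\end{proof}
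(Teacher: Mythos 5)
Your proof is correct and is exactly the paper's argument: combine Theorem~\ref{th:sets}, Lemma~\ref{le:contra2}, and Proposition~\ref{pr:contra}. Nothing further is needed.
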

\begin{proof}
  Combine Theorem~\ref{th:sets} with Lemma~\ref{le:contra2} and
  Proposition~\ref{pr:contra}.
\end{proof}

\section{FI-modules}

The proof of the artinian conjecture yields an alternative proof of
the following result due to Church, Ellenberg, Farb, and Nagpal.

Let $\Ga_\inj$ denote the category whose objects are finite sets and
whose morphisms are injective maps.

\begin{thm}[{\cite[Theorem~A]{CEFN}}]
 Let $\A$ be a locally noetherian Grothendieck abelian category. Then the
  category $\Fun(\Ga_\inj,\A)$ is locally noetherian.
\end{thm}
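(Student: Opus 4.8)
The plan is to reduce the claim about $\Fun(\Ga_\inj,\A)$ to the already-proved Theorem~\ref{th:sets} by exhibiting a suitable comparison functor and invoking Proposition~\ref{pr:contra}. Note first that $\Fun(\Ga_\inj,\A)=\Fun((\Ga_\inj^\op)^\op,\A)$, so it suffices to show that $\Fun((\Ga_\inj^\op)^\op,\A)$ is locally noetherian; equivalently, writing $\Ga_\inj^\op$ for the category of finite sets with morphisms reversed-injections, we want $\Fun$ of its opposite into $\A$ to be locally noetherian. The natural candidate is to compare $\Ga_\inj^\op$ with $\Ga$: there is an obvious functor sending a finite set to itself, and an injection $X\hookrightarrow Y$ in $\Ga_\inj$ becomes, under taking complements or rather under the partial-map/pointed-set perspective, a morphism in $\Ga$. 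Concretely, an injective map $f\colon X\to Y$ determines a surjection $Y\twoheadrightarrow X\sqcup\{\ast\}$ (collapse everything outside the image to $\ast$, and use $f^{-1}$ on the image), i.e. a morphism in the category $\Ga_\ast$ of pointed finite sets. So I would first identify $\Ga_\inj^\op$ with a subcategory of pointed finite sets, or more directly work with the functor $\Ga\to\Ga_\inj^\op$ (equivalently $\Ga^\op\to\Ga_\inj$) that I now describe.

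First I would set up the functor $\p\colon\Ga^\op\to\Ga_\inj$ as follows. On objects, $\p(\mathbf n)=\mathbf n$. A morphism $\mathbf m\to\mathbf n$ in $\Ga^\op$ is a map of sets $g\colon\mathbf n\to\mathbf m$; I send it to the partial-map data, but to land honestly in $\Ga_\inj$ I instead enlarge: consider the functor $\Ga_\ast\to\Ga_\inj$ sending a pointed set $(Y,\ast)$ to $Y\setminus\{\ast\}$ and a pointed map $Y\to Y'$ to the induced injection on the preimage of the non-basepoint elements (this is injective precisely when restricted to where it does not hit $\ast$). Dually, every functor $F\colon\Ga_\inj\to\A$ extends, and there is an adjunction between $\Ga$ and $\Ga_\ast$ (adding a basepoint is left adjoint to the forgetful functor). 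The cleanest route: show that the inclusion $\Ga_\inj\to\Ga_\ast^{\mathrm{op?}}$ and the add-a-point functor $\Ga\to\Ga_\ast$ are each contravariantly finite, using adjointness isomorphisms exactly as in Lemma~\ref{le:contra2}. Since a composite of contravariantly finite functors is contravariantly finite (noted after the definition), I would then chain these to produce a single contravariantly finite functor $\C\to\Ga_\inj^\op$ with $\Fun(\C^\op,\A)$ already known to be locally noetherian (here $\C$ will be $\Ga$ or $\Ga_\os$ after one more reduction).

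With that comparison in hand, the argument concludes exactly as in the proof of Theorem~\ref{th:sets}: $\Ga_\os$ is Gr\"obner (Proposition~\ref{pr:groebner}), hence $\Fun((\Ga_\os)^\op,\A)$ is locally noetherian by Theorem~\ref{th:groebner}; then I compose the contravariantly finite inclusions $\Ga_\os\to\Ga$ and $\Ga\to\Ga_\ast$, together with the contravariantly finite functor relating $\Ga_\ast$ to $\Ga_\inj^\op$, and apply Proposition~\ref{pr:contra} to propagate local noetherianity along the chain, finally obtaining that $\Fun((\Ga_\inj^\op)^\op,\A)=\Fun(\Ga_\inj,\A)$ is locally noetherian.

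The main obstacle I anticipate is getting the variance and the basepoint bookkeeping exactly right: an injection $X\hookrightarrow Y$ corresponds to a \emph{surjection} $Y\twoheadrightarrow X_+$ of pointed sets pointing the other way, so one must be careful whether $\Ga_\inj$ maps into $\Ga_\ast$ or its opposite, and the contravariant finiteness condition must be checked in the correct direction. The verification that the relevant functor is contravariantly finite reduces, via the definition, to decomposing each $\Hom$-set $\Ga_\inj(\p(-),y)$ as a finite iterated extension (by epis and monos of $\Set$-valued functors) of representables $\Ga(-,x_i)$; concretely this means, for a fixed target finite set $\mathbf n$, stratifying injections into $\mathbf n$ by their image, which gives a decomposition $\Ga_\inj(-,\mathbf n)\cong\bigsqcup_{\mathbf m\hookrightarrow\mathbf n}(\text{bijections onto a fixed }\mathbf m\text{-subset})$, entirely parallel to part (1) of Lemma~\ref{le:contra1}. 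Once the correct functor is pinned down, each individual step is a short adjointness or stratification computation of the type already appearing in Lemmas~\ref{le:contra1} and \ref{le:contra2}, so no genuinely new technique is needed.
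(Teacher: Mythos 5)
There is a genuine gap, and it is exactly where you predicted: the variance and the comparison functor. First, the functor $\Ga_\ast\to\Ga_\inj$ you describe does not exist: a pointed map $Y\to Y'$ need \emph{not} be injective on the preimage of the non-basepoint elements (take $\{1,2,\ast\}\to\{1,\ast\}$ sending both $1$ and $2$ to $1$), so your parenthetical claim is false and the link in your chain from $\Ga_\ast$ to $\Ga_\inj^\op$ breaks. The honest relationship is an embedding $\Ga_\inj^\op\to\Ga_\ast$ (an injection $X\hookrightarrow Y$ gives the pointed collapse $Y_+\to X_+$), but that points the wrong way: Proposition~\ref{pr:contra} transfers local noetherianity from the \emph{source} to the \emph{target} of a contravariantly finite functor, so you need a functor landing \emph{in} $\Ga_\inj^\op$ from a category already under control. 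Second, the same variance slip reappears in your final verification: the generators of $\Fun(\Ga_\inj,\A)=\Fun((\Ga_\inj^\op)^\op,\A)$ are the objects $M[\Ga_\inj(\mathbf n,-)]$, so the Hom-sets to be controlled are the injections \emph{out of} $\mathbf n$, which grow without bound; your stratification of $\Ga_\inj(-,\mathbf n)$ (injections \emph{into} $\mathbf n$, a functor with only finitely many nonempty values) addresses the easy, irrelevant direction.

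What closes the gap is a concrete functor $\p\colon\Ga_\os\to(\Ga_\inj)^\op$: the identity on objects, sending an ordered surjection $f\colon\mathbf m\to\mathbf n$ to the injection $f^!\colon\mathbf n\to\mathbf m$, $f^!(i)=\min f^{-1}(i)$. This is functorial precisely because $(gf)^!=f^!g^!$ holds for \emph{ordered} surjections (which is why one cannot use all of $\Ga_\sur$ or $\Ga$ here), and it is contravariantly finite because the map $\Ga_\os(-,\mathbf n)\times\mathfrak S_n\to\Ga_\inj(\mathbf n,\p-)$, $(f,\s)\mapsto f^!\s$, is an epimorphism from a finite coproduct of representables. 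Combined with Proposition~\ref{pr:groebner}, Theorem~\ref{th:groebner} and Proposition~\ref{pr:contra}, this yields the claim in one step; no detour through pointed sets is needed, and indeed the detour as you set it up cannot be completed.
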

\begin{proof} The following argument has been suggested by Kai-Uwe
  Bux.  Consider the functor $\p\colon \Ga_\os\to(\Ga_\inj)^\op$ which
  is the identity on objects and takes a map $f\colon\mathbf
  m\to\mathbf n$ to $f^!\colon\mathbf n\to\mathbf m$ given by
  $f^!(i)=\min f^{-1}(i)$. This functor is contravariantly finite,
  since for each integer $n\ge 0$ the morphism
\[\Ga_\os(-,\mathbf n)\times\mathfrak S_n \lto\Ga_\inj(\mathbf n,\p-)\]
which sends a pair $(f,\s)$ to $f^!\s$ is an epimorphism.

It follows from Proposition~\ref{pr:contra} that the category
$\Fun(\Ga_\inj,\A)$ is locally noetherian, since
$\Fun((\Ga_\os)^\op,\A)$ is locally noetherian by
Proposition~\ref{pr:groebner} and Theorem~\ref{th:groebner}.
\end{proof}

\section*{Note added in proof}

After completing this paper I found that Theorem~\ref{th:groebner} is
precisely the statement of Theorem~3.1 in [G. Richter, Noetherian
semigroup rings with several objects, in {\it Group and semigroup
  rings (Johannesburg, 1985)}, 231--246, North-Holland Math. Stud.,
126, North-Holland, Amsterdam, 1986].

%%%% the form of references

%% \bibliographystyle{amsplain} 
 
\ifx\undefined\bysame 
\newcommand{\bysame}{\leavevmode\hbox to3em{\hrulefill}\,} 
\fi 

%%%% Use the label in the form {Name: No. the sequence of Number}.

\end{document}